\documentclass[12pt,leqnod]{amsart}
\usepackage[utf8]{inputenc}
\usepackage[centertags]{amsmath}
\usepackage{newlfont}
\usepackage{amsmath}
\usepackage{amsthm}
\usepackage{amsfonts}
\usepackage{amssymb}
\usepackage{graphicx}
\usepackage{cancel}

\newtheorem{theorem}{Theorem}[section]
\newtheorem{definition}[theorem]{Definition}

\newtheorem{lemma}[theorem]{Lemma}
\newtheorem{proposition}[theorem]{Proposition}
\newtheorem{remark}[theorem]{Remark}

\numberwithin{equation}{subsubsection}
\usepackage[top=2.5cm,bottom=2.5cm,left=2.5cm,right=2.4cm]{geometry}
\title[A new approach to the S-functional calculus]{ A new approach to the S-functional calculus}
\author{El Hassan Benabdi and Mohamed Barraa}
\subjclass[2010]{47A10; 47A60; 46S10; 47S10}
\keywords{Quaternionic Banach spaces; S-spectrum; Functional calculus}
\address{Department of Mathematics, Faculty of Sciences Semlalia, Cadi Ayyad University, Marrakesh, Morocco} 
\email{elhassan.benabdi@gmail.com}
\email{barraa@hotmail.com}
\date{}
\begin{document}
\maketitle
\begin{abstract}
In this paper, we first prove that the S-spectrum of a bounded right quaternionic linear operator on a two-sided quaternionic Banach space is a union of the spectrum of some bounded linear operators on a complex Banach space. Furthermore, we show that the S-functional calculus is obtained by the Riesz-Dunford functional calculus for complex linear operators. We also give simple proofs of some already existing results.
\end{abstract}

\section{Introduction}
Let $X$ be a complex Banach space, we denote by $\mathcal{B}(X)$ the set of all bounded linear operators on $X$. For $T\in\mathcal{B}(X)$, $\rho(T)$, $\sigma(T)$ and $r(T)$ shall denote its resolvent set, its spectrum set and its spectral radius, respectively. Let $U$ be an open set in the plane and let $\mathcal{H}(U)$ be the set of all complex valued functions defined and holomorphic on $U$. Suppose that $D$ is a bounded Cauchy domain such that $\sigma(T)\subset D\subset\overline{D}\subset U$, the Riesz-Dunford functional calculus for $T$ is defined for any function $f\in \mathcal{H}(U)$ by
$$f(T):=\frac{1}{2\pi \mathrm{i}}\int_{\partial D}f(\lambda)(\lambda I-T)^{-1}d\lambda,$$
where $\partial D$ is the (oriented) boundary of $D$. Let $f,g\in \mathcal{H}(U)$, it is well-known that
\begin{itemize}
\item[•] $\sigma(f(T))=f(\sigma(T))$;
\item[•] $(fg)(T)=f(T)g(T)=g(T)f(T)$;
\item[•] let $V$ be an open set containing $f(U)$ and $h\in \mathcal{H}(V)$, then $(h \circ f)(T)=h(f(T))$;
\item[•] if $(f_n)$ converges to $f$ uniformly on compact subsets of $U$, then $f(T)=\lim_{n\rightarrow\infty}f_n(T)$. 
\end{itemize}

The aim of this paper is to show that almost every results of quaternionic spectral theory can be obtained by the classical spectral theory.\\

The article is organized as follows. In Section 2 we introduce the set of quaternions, slice hyperholomorphic functions, two-sided quaternionic Banach space and S-spectrum as needed for the development of this article. In Section 3 we give an equivalent definition of the S-spectrum and we give alternative proof of some already existing results. In Section 4 we show that the S-functional calculus can be obtained by the Riesz-Dunford functional calculus for complex linear operators. We also show that most of the properties that hold for the Riesz-Dunford functional calculus can be extended to the S-functional calculus.
\section{Preliminary results}
We denote by $\mathbb{H}$ the algebra of quaternions, introduced by Hamilton in 1843. An element $q$ of $\mathbb{H}$ is of the form
 $$q = a + b\mathrm{i} + c\mathrm{j} + d\mathrm{k};\; a,b,c,d\in\mathbb{R}$$
where $\mathrm{i},\mathrm{j}$ and $\mathrm{k}$ are imaginary units. By definition, they satisfy
$$\mathrm{i}^2=\mathrm{j}^2=\mathrm{k}^2=\mathrm{i}\mathrm{j}\mathrm{k}=-1.$$
Given $q=a+b\mathrm{i}+c\mathrm{j}+d\mathrm{k}$, then the conjugate quaternion, the norm, the real and the imaginary parts of $q$ are respectively defined by $\bar{q}=a-b\mathrm{i}-c\mathrm{j}-d\mathrm{k}$, $\vert q\vert=\sqrt{q\bar{q}}=\sqrt{a^2+b^2+c^2+d^2}$, $\text{Re}(q):=\frac{1}{2}(q+\bar{q})=a$ and $\text{Im}(q):=\frac{1}{2}(q-\bar{q})=b\mathrm{i} + c\mathrm{j} + d\mathrm{k}$.\\

The unit sphere of imaginary quaternions is given by
$$\mathbb{S}=\{q\in\mathbb{H}:q^2=-1\}.$$
Let $p$ and $q$ be two quaternions. $p$ and $q$  are said to be conjugated, if there is $s\in\mathbb{H}\setminus\{0\}$ such that $p=sqs^{-1}$. The set of all quaternions conjugated with $q$, is equal to the 2-sphere 
$$[q] =\{\text{Re}(q)+\vert\text{Im}(q)\vert j : j \in \mathbb{S}\}=\text{Re}(q)+\vert\text{Im}(q)\vert\mathbb{S}.$$
For every $j \in \mathbb{S}$, denote by $\mathbb{C}_j$ the real subalgebra of $\mathbb{H}$ generated by $j$; that is, 
$$\mathbb{C}_j:=\{\alpha+\beta j\in\mathbb{H}: \alpha,\beta\in\mathbb{R}\}.$$
We say that $U\subseteq\mathbb{H}$ is axially symmetric if $[q] \subset U$ for every $q \in U$.\\

For a thorough treatment of the algebra of quaternions $\mathbb{H}$, the reader is referred, for instance, to \cite{5ref10}.
\begin{definition}[{\cite[Definition 2.1.2]{5ref7}}, Slice hyperholomorphic functions] 
Let $U\subseteq\mathbb{H}$ be an axially symmetric open set and let $\mathcal{U}=\{(\alpha,\beta)\in\mathbb{R}^2: \alpha+\beta\mathbb{S}\subset U\}$.\\
A function $f : U\rightarrow\mathbb{H}$ is called a left slice function if there exist two functions $f_0, f_1 : \mathcal{U}\rightarrow\mathbb{H}$ such that:
$$f(q) = f_0(\alpha,\beta) + if_1(\alpha,\beta)\;\;\; \text{ for every } q = \alpha+\beta i \in U$$
and if $f_0, f_1$ satisfy the compatibility conditions
\begin{equation}
f_0(\alpha,-\beta) = f_0(\alpha,\beta),\;\;\;\;\;\; f_1(\alpha,-\beta) = -f_1(\alpha,\beta).\label{5eq6}
\end{equation}
If in addition $f_0$ and $f_1$ satisfy the Cauchy-Riemann equations
\begin{equation}
\begin{split}
\frac{\partial}{\partial \alpha}f_0(\alpha,\beta)-\frac{\partial}{\partial \beta}f_1(\alpha,\beta)= 0,\\
\frac{\partial}{\partial \beta}f_0(\alpha,\beta)+\frac{\partial}{\partial \alpha}f_1(\alpha,\beta)= 0,\label{5eq7}
\end{split}
\end{equation}
then $f$ is called left slice hyperholomorphic. We denote the set of all left slice hyperholomorphic functions on $U$ by $\mathcal{SH}_L(U)$.\\
A function $f : U\rightarrow\mathbb{H}$ is called a right slice function if there exist two functions $f_0, f_1 : \mathcal{U}\rightarrow\mathbb{H}$ such that:
$$f(q) = f_0(\alpha,\beta) + f_1(\alpha,\beta)i\;\;\; \text{ for every } q = \alpha+\beta i \in U$$
and if $f_0, f_1$ satisfy the compatibility conditions $(\ref{5eq6})$. If in addition $f_0$ and $f_1$ satisfy
the Cauchy-Riemann equations $(\ref{5eq7})$, then $f$ is called right slice hyperholomorphic. We denote the set of all right slice hyperholomorphic functions on $U$ by $\mathcal{SH}_R(U)$.\\
If $f$ is a left (or right) slice hyperholomorphic function such that $f_0$ and $f_1$ are real-valued, then $f$ is called intrinsic slice hyperholomorphic function. The set of all intrinsic slice hyperholomorphic functions on $U$ will be denoted by $\mathcal{N}(U)$.
\end{definition}
\begin{lemma}[{\cite[Lemma 2.1.5]{5ref7}}]
\label{5l6}
Let $U\subseteq\mathbb{H}$ be axially symmetric and let $f$ be a left (or right) slice function on $U$. The following statements are equivalent.
\begin{itemize}
\item[(i)] The function $f$ is intrinsic.
\item[(ii)] We have $f(U\cap\mathbb{C}_i) \subset\mathbb{C}_i$ for all $i\in\mathbb{S}$.
\item[(iii)] We have $f(\bar{q}) = \overline{f(q)}$ for all $q\in U$.
\end{itemize}
\end{lemma}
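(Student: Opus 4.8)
The plan is to treat the left slice case in detail, the right case being identical after replacing $if_1(\alpha,\beta)$ by $f_1(\alpha,\beta)i$ and recomputing conjugates the same way, and to prove the two equivalences (i)$\Leftrightarrow$(ii) and (i)$\Leftrightarrow$(iii). I would rely on three elementary facts about $\mathbb{H}$: for $w\in\mathbb{H}$ and $i\in\mathbb{S}$, $w\in\mathbb{C}_i$ if and only if $wi=iw$, since the centralizer of $i$ is exactly $\mathbb{C}_i$; the intersection $\bigcap_{i\in\mathbb{S}}\mathbb{C}_i=\mathbb{R}$, because an element commuting with two non-parallel imaginary units is central, hence real; and the conjugation rules $\overline{ab}=\overline{b}\,\overline{a}$ and $\overline{i}=-i$ for $i\in\mathbb{S}$. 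The structural point that makes the argument run is that for a fixed $(\alpha,\beta)\in\mathcal{U}$ the quaternions $f_0(\alpha,\beta)$ and $f_1(\alpha,\beta)$ do not depend on the chosen imaginary unit, and the very definition of $\mathcal{U}$ guarantees $\alpha+\beta i\in U$ for every $i\in\mathbb{S}$; this lets me vary $i$ over all of $\mathbb{S}$ while holding $f_0,f_1$ fixed.

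For (i)$\Rightarrow$(ii): if $f_0(\alpha,\beta),f_1(\alpha,\beta)\in\mathbb{R}$, then for $q=\alpha+\beta i\in U\cap\mathbb{C}_i$ the value $f(q)=f_0(\alpha,\beta)+if_1(\alpha,\beta)$ is a real combination of $1$ and $i$, so $f(q)\in\mathbb{C}_i$. For (ii)$\Rightarrow$(i): fix $(\alpha,\beta)\in\mathcal{U}$ and an arbitrary $i\in\mathbb{S}$, and abbreviate $f_0=f_0(\alpha,\beta)$, $f_1=f_1(\alpha,\beta)$. Evaluating at $\alpha+\beta i$ gives $f_0+if_1\in\mathbb{C}_i$, while evaluating at $\alpha-\beta i=\alpha+\beta(-i)$ and using the compatibility conditions $(\ref{5eq6})$ gives $f_0-if_1\in\mathbb{C}_i$; adding and subtracting yields $f_0\in\mathbb{C}_i$ and $if_1\in\mathbb{C}_i$, hence $f_1\in\mathbb{C}_i$. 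As $i\in\mathbb{S}$ is arbitrary, $f_0,f_1\in\bigcap_{i\in\mathbb{S}}\mathbb{C}_i=\mathbb{R}$, so $f$ is intrinsic.

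For (i)$\Rightarrow$(iii): with $f_0,f_1$ real and $q=\alpha+\beta i$ one computes $\overline{f(q)}=f_0-if_1$, while $\bar q=\alpha+\beta(-i)$ gives $f(\bar q)=f_0-if_1$ by $(\ref{5eq6})$, so the two coincide. For (iii)$\Rightarrow$(i): fix $(\alpha,\beta)\in\mathcal{U}$ and write the identity $f(\bar q)=\overline{f(q)}$ for a general $i\in\mathbb{S}$. Using $(\ref{5eq6})$ on the left and $\overline{ab}=\overline{b}\,\overline{a}$ on the right, it collapses to $f_0-\overline{f_0}=i\,\mathrm{Im}(f_1)+\mathrm{Im}(f_1)\,i$ for every $i\in\mathbb{S}$. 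The left-hand side equals $2\,\mathrm{Im}(f_0)$, a purely imaginary quaternion independent of $i$, whereas the right-hand side equals $-2\langle i,\mathrm{Im}(f_1)\rangle$, a real number; hence both sides vanish. Vanishing of the left side gives $f_0\in\mathbb{R}$, and vanishing of $\langle i,\mathrm{Im}(f_1)\rangle$ for all $i\in\mathbb{S}$ forces $\mathrm{Im}(f_1)=0$, so $f_1\in\mathbb{R}$ and $f$ is intrinsic.

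The conjugation bookkeeping is routine; the genuine content, and the step I expect to need the most care, is the passage from a pointwise membership or symmetry condition to the reality of $f_0$ and $f_1$. This is precisely where varying $i$ over all of $\mathbb{S}$ at a fixed $(\alpha,\beta)$ is indispensable and where the defining property of $\mathcal{U}$ is invoked. One should also dispatch the degenerate slice $\beta=0$ separately: there $(\ref{5eq6})$ already forces $f_1(\alpha,0)=0$, and at a real point $q=\alpha$ condition (iii) reads $f(\alpha)=\overline{f(\alpha)}$, giving $f_0(\alpha,0)\in\mathbb{R}$ directly.
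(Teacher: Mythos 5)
The paper states this lemma without proof, importing it verbatim from the cited reference (Lemma 2.1.5 of \cite{5ref7}), so there is no in-paper argument to compare yours against; your proposal must be judged on its own, and it stands up. The facts you isolate are true and sufficient: that each $\mathbb{C}_i$ is a real subalgebra, that $\bigcap_{i\in\mathbb{S}}\mathbb{C}_i=\mathbb{R}$, and the anti-multiplicativity of conjugation. The pivotal structural observation --- that $f_0(\alpha,\beta)$ and $f_1(\alpha,\beta)$ do not depend on the slice, so that $i$ may be varied over all of $\mathbb{S}$ at a fixed point of $\mathcal{U}$ --- is exactly what makes both implications (ii)$\Rightarrow$(i) and (iii)$\Rightarrow$(i) run, and you invoke it correctly via the definition of $\mathcal{U}$. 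Your computations check out: in (ii)$\Rightarrow$(i), evaluating at $\alpha\pm\beta i$ and using (\ref{5eq6}) gives $f_0,\, if_1\in\mathbb{C}_i$ for every $i\in\mathbb{S}$, whence $f_0,f_1\in\bigcap_{i\in\mathbb{S}}\mathbb{C}_i=\mathbb{R}$; in (iii)$\Rightarrow$(i), the symmetry condition does reduce to $2\,\mathrm{Im}(f_0)=i\,\mathrm{Im}(f_1)+\mathrm{Im}(f_1)\,i=-2\langle i,\mathrm{Im}(f_1)\rangle$, and since a purely imaginary quaternion can equal a real number only if both vanish, $f_0$ is real, while letting $i$ run over $\mathrm{i},\mathrm{j},\mathrm{k}$ kills $\mathrm{Im}(f_1)$. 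Two cosmetic remarks: the centralizer characterization of $\mathbb{C}_i$ is announced but never actually used (your argument only needs that $\mathbb{C}_i$ is a subalgebra containing $i$ and that the slices intersect in $\mathbb{R}$), and the separate treatment of $\beta=0$ is harmless but not strictly necessary, since (\ref{5eq6}) already forces $f_1(\alpha,0)=0$ and the general identities then degenerate correctly.
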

\begin{remark}
\label{5r1}
\emph{Let $U\subseteq\mathbb{H}$ be an axially symmetric open set and let $f\in\mathcal{N}(U)$. If we identify $\mathbb{C}_i$ with $\mathbb{C}$, by Lemma \ref{5l6} (ii) and the Cauchy-Riemann equations (\ref{5eq7}), $f\vert_{U\cap\mathbb{C}}$ is holomorphic on $U\cap\mathbb{C}$.}
\end{remark}
\begin{definition}
Let $(X,+)$ be an abelian group.
\begin{itemize}
\item $X$ is a right quaternionic vector space denoted by $X_R$ if it is endowed with a right quaternionic multiplication $(X,\mathbb{H})\rightarrow X$, $(x, q) \mapsto xq$ such that for all $x,y \in X$ and all $p, q \in\mathbb{H}$,
$$x(p + q) = xp + xq,\;\; (x+y)q = xq + yq,\;\;  (xp)q = x(pq)\; \text{ and }\; x1 = x.$$
\item $X$ is a left quaternionic vector space denoted by $X_L$ if it is endowed with a left quaternionic multiplication $(\mathbb{H},X)\rightarrow X$, $(q,x) \mapsto qx$ such that for all $x,y \in X$ and all $p, q \in\mathbb{H}$,
$$(p + q)x = px + qx,\;\;  q(x+y) = qx + qy,\;\;  q(px) = (qp)x\; \text{ and }\; 1x = x.$$
\item $X$ is a two-sided quaternionic vector space if it is endowed with a left and a right quaternionic  multiplication such that $X$ is both a left and a right quaternionic vector space and such that $rx = xr$ for all $r\in\mathbb{R}$, and $(px)q=p(xq)$ for all $p,q\in\mathbb{H}$ and all $x\in X$.
\end{itemize}
\end{definition}
\begin{definition}
Let $X_R$ be a right quaternionic vector space. A function  $\Vert\cdot\Vert : X_R \rightarrow [0;+\infty)$ is called a norm on $X_R$, if it satisfies
\begin{itemize}
\item[(i)] $\Vert x\Vert=0$ if and only if $x=0$;
\item[(ii)] $\Vert xq\Vert=\Vert x\Vert \vert q\vert$ for all $x\in X_R$ and all $q \in\mathbb{H}$;
\item[(iii)] $\Vert x+y\Vert\leq \Vert x\Vert+\Vert y\Vert$ for all $x,y\in X_R$.
\end{itemize}
If $X_R$ is complete with respect to the metric induced by $\Vert\cdot\Vert$, we call $X_R$ a right quaternionic Banach space.\\
Let $X_L$ be a left quaternionic vector space. A function $\Vert\cdot\Vert : X_L \rightarrow [0;+\infty)$ is called a norm on $X_L$, if it satisfies $(\mathrm{i})$, $(\mathrm{iii})$ and 
\begin{itemize}
\item[(ii')] $\Vert qx\Vert=\vert q\vert\Vert x\Vert$ for all $x\in X_L$ and all $q \in\mathbb{H}$.
\end{itemize}
If $X_L$ is complete with respect to the metric induced by $\Vert\cdot\Vert$, we call $X_L$ a left quaternionic Banach space.\\
Finally, a two-sided quaternionic vector space $X$ is called a two-sided quaternionic Banach space if it is endowed with a norm $\Vert\cdot\Vert$ such that it is both left and right quaternionic Banach space.
\end{definition}
\begin{remark}
\emph{If $X$ is a two-sided quaternionic Banach space, then  $\Vert qx\Vert=\Vert xq\Vert=\vert q\vert\Vert x\Vert$ for all $x\in X$ and all $q \in\mathbb{H}$.}
\end{remark}
\begin{definition}
Let $X$ be a two-sided quaternionic Banach space and let $\mathbb{K}\in\{\mathbb{R},\mathbb{H},\mathbb{C}_i:i\in\mathbb{S}\}$. A $\mathbb{K}$-right linear operator on $X$ is a map $T : X \rightarrow X$ such that:
$$T(xq + y) = (Tx)q + Ty \;\;  \text{ for all }\;\; x, y \in X\;\; \text{ and all }\;\; q\in\mathbb{K}.$$
A $\mathbb{K}$-right linear operator $T$ on $X$ is called bounded if
$$\Vert T\Vert:=\sup\{\Vert Tx\Vert: x\in X, \Vert x\Vert=1\}<\infty.$$
\end{definition}
We say that $T$ is a right linear operator if $\mathbb{K}=\mathbb{H}$ and we say that $T$ is a linear operator if $\mathbb{K}=\mathbb{C}_i$ for some $i\in\mathbb{S}$.\\
The set of all right linear bounded operators on $X$ is denoted by $\mathcal{B}_R(X)$. By $\mathcal{B}^{\mathbb{R}}(X)$ we denote  the set of all $\mathbb{R}$-right linear bounded operators on $X$. For $T\in\mathcal{B}^{\mathbb{R}}(X)$, let $\mathcal{N}(T)$ denote the null space of $T$, and let $\mathcal{R}(T)$ denote the range of $T$.\\

In a two-sided quaternionic Banach space $X$, we can define a left and a right quaternionic  multiplication on $\mathcal{B}_R(X)$  by
\begin{align*}
(Tq)x = T(qx)\text{ and } (qT)(x) = q(Tx) \text{ for all } q \in\mathbb{H},\; x\in X  \text{ and all } T \in \mathcal{B}_R(X).
\end{align*}
So $ \mathcal{B}_R(X)$ is a two-sided quaternionic Banach space equipped with the metric $\mathcal{B}_R(X)\times\mathcal{B}_R(X)\ni(A,B)\mapsto\Vert A-B\Vert$.\\

The spectral theory over quaternionic Hilbert spaces has been developed in \cite{5ref7} and \cite{5ref10}.\\

In \cite{5ref7}, the authors extended the definitions of the spectrum and resolvent in quaternionic Banach spaces as follows.
\begin{definition}
\label{5d1}
Let $X$ be a two-sided quaternionic Banach space and let $T \in \mathcal{B}^{\mathbb{R}}(X)$. For $q \in\mathbb{H}$, we set
$$Q_q(T) := T^2 -2\text{Re}(q)T + \vert q\vert^2I,$$
where $I$ is the identity operator on $X$.\\
Let $T\in\mathcal{B}_R(X)$. We define the spherical resolvent set $\rho_S(T)$ of $T$ as
$$\rho_S(T) := \{q \in\mathbb{H} : Q_q(T) \text{ is invertible in } \mathcal{B}_R(X)\},$$
and we define the spherical spectrum $\sigma_S(T)$ of $T$ as  
$$\sigma_S(T):=\mathbb{H}\setminus\rho_S(T).$$
The spherical approximate point spectrum of $T$ is defined by
$$\sigma_{aS}(T):= \{q \in\mathbb{H} : \text{there is a sequence } \{x_n\} \text{ of unit vectors in } X \text{ such that } \Vert Q_q(T)x_n\Vert\rightarrow0\}.$$
The surjectivity spherical spectrum of $T\in \mathcal{B}_R(X)$ is defined by
$$\sigma_{sS}(T):= \{q \in\mathbb{H} : \mathcal{R}(Q_q(T))\neq X\}.$$
\end{definition}
The spherical spectrum $\sigma_S(T)$ decomposes into three disjoint subsets as follows:
\begin{itemize}
\item[(i)] The spherical point spectrum of $T$:
$$\sigma_{pS}(T):= \{q \in\mathbb{H} : \mathcal{N}(Q_q(T))\neq\{0\}\}.$$
\item[(ii)] The spherical residual spectrum of $T$:
$$\sigma_{rS}(T):= \{q \in\mathbb{H} : \mathcal{N}(Q_q(T))=\{0\}\text{ and }\overline{\mathcal{R}(Q_q(T))}\neq X\}.$$
\item[(iii)] The spherical continuous spectrum of $T$:
$$\sigma_{cS}(T):= \{q \in\mathbb{H} : \mathcal{N}(Q_q(T))=\{0\},\;\overline{\mathcal{R}(Q_q(T))}= X\text{ and }\mathcal{R}(Q_q(T))\neq X\}.$$
\end{itemize} 
Let $p$ and $q$ be two quaternions such that $p\in[q]$, then $\text{Re}(p)=\text{Re}(q)$ and $\vert p\vert=\vert q\vert$. Hence  for any $T \in \mathcal{B}_R(X)$, $Q_p(T)=Q_q(T)$. It follows that
\begin{proposition}
\label{5p2}
Let $T\in\mathcal{B}_R(X)$. The sets $\rho_S(T)$, $\sigma_S(T)$, $\sigma_{aS}(T)$, $\sigma_{sS}(T)$, $\sigma_{pS}(T)$, $\sigma_{rS}(T)$ and $\sigma_{cS}(T)$ are axially symmetric.
\end{proposition}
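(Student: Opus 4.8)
The plan is to exploit the single observation recorded just before the statement: for $p\in[q]$ one has $\text{Re}(p)=\text{Re}(q)$ and $\vert p\vert=\vert q\vert$, so that the operator $Q_q(T)=T^2-2\text{Re}(q)T+\vert q\vert^2 I$ depends on $q$ only through the pair $(\text{Re}(q),\vert q\vert)$. Consequently $Q_p(T)=Q_q(T)$ for every $p\in[q]$. Since all seven sets in the statement are defined purely in terms of properties of the single operator $Q_q(T)$, membership of $q$ in any of them is constant along the whole sphere $[q]$, which is exactly axial symmetry in the sense defined in Section 2.

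More concretely, I would argue set by set, each time using the same template. Take $q$ in the set under consideration and an arbitrary $p\in[q]$; then $Q_p(T)=Q_q(T)$, so $p$ enjoys the same defining property as $q$ and hence lies in the set as well, giving $[q]\subset S$ for the set $S$. For $\rho_S(T)$ the defining property is invertibility of $Q_q(T)$ in $\mathcal{B}_R(X)$, and for $\sigma_S(T)$ its failure; for $\sigma_{pS}(T)$ it is $\mathcal{N}(Q_q(T))\neq\{0\}$; for $\sigma_{sS}(T)$ it is $\mathcal{R}(Q_q(T))\neq X$; for $\sigma_{aS}(T)$ it is the existence of a sequence of unit vectors $\{x_n\}$ with $\Vert Q_q(T)x_n\Vert\rightarrow0$; and for $\sigma_{rS}(T)$ and $\sigma_{cS}(T)$ the defining conjunctions of conditions on $\mathcal{N}(Q_q(T))$, $\overline{\mathcal{R}(Q_q(T))}$ and $\mathcal{R}(Q_q(T))$. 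In every case the condition refers only to the operator $Q_q(T)$, and the equality $Q_p(T)=Q_q(T)$ transports it verbatim from $q$ to $p$.

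There is essentially no obstacle here: the substantive content is precisely the equality $Q_p(T)=Q_q(T)$ for $p\in[q]$, already established in the paragraph preceding the statement, and everything else is a mechanical unwinding of the definitions. The only point requiring a moment's care is the residual and continuous spectra, whose definitions are conjunctions of several conditions; but since each individual condition depends only on $Q_q(T)$, their conjunction does too, and the same one-line transport argument applies. In fact, once $\rho_S(T)$ and $\sigma_S(T)$ are handled, the symmetry of the remaining sets could alternatively be deduced from the observation that they are each built from the kernel, range, and closure of range of the fixed operator $Q_q(T)$, all of which are manifestly unchanged when $q$ is replaced by a conjugate $p$.
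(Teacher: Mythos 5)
Your proposal is correct and coincides with the paper's own argument: the paper proves this proposition precisely by observing, in the paragraph preceding the statement, that $\mathrm{Re}(p)=\mathrm{Re}(q)$ and $\vert p\vert=\vert q\vert$ for $p\in[q]$, hence $Q_p(T)=Q_q(T)$, and then noting that membership of $q$ in each of the seven sets depends only on the operator $Q_q(T)$. Your set-by-set unwinding is just a more explicit writing of the same one-line transport argument.
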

\section{Quaternionic spectrum}
In the following $X$ will be a fixed two-sided quaternionic Banach space. For convenience, we define the mapping:
$$\begin{array}{cccc}
R_q : & X & \rightarrow & X \\ 
 & x & \mapsto & xq.
\end{array} $$
Let $T\in\mathcal{B}^{\mathbb{R}}(X)$ and let $q\in\mathbb{H}$. Define $\Delta_q(T):=R_q-T$. If no confusion can arise, then we write simply $\Delta_q$ instead of $\Delta_q(T)$. Note that $\Delta_q\in\mathcal{B}^{\mathbb{R}}(X)$.
\begin{lemma}
\label{5l1}
Let $T\in\mathcal{B}^{\mathbb{R}}(X)$ and let $q\in\mathbb{H}$. Then $\Delta_q$ is injective (resp. surjective, bijective, has closed range, has dense range) if and only if $\Delta_{\bar{q}}$ is.
\end{lemma}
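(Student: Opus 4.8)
The plan is to realize $\Delta_{\bar q}$ as a similarity transform of $\Delta_q$ by an invertible isometry of $X$, because injectivity, surjectivity, bijectivity, closedness of the range and density of the range are all preserved under pre- and post-composition with a bounded bijection having bounded inverse. If $q\in\mathbb{R}$ then $\bar q=q$ and there is nothing to prove, so I assume $\mathrm{Im}(q)\neq 0$ and set $i_0:=\mathrm{Im}(q)/\lvert\mathrm{Im}(q)\rvert\in\mathbb{S}$.

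First I would choose $j\in\mathbb{S}$ orthogonal to $i_0$, so that $ji_0=-i_0j$; then, writing $q=\alpha+\beta i_0$ with $\alpha\in\mathbb{R}$ and $\beta>0$, a one–line computation gives $qj=j\bar q$ (equivalently $\bar q=jqj^{-1}$, which merely records that $\bar q\in[q]$). Next I would note that right multiplication is an anti-homomorphism, $R_aR_b=R_{ba}$, so $R_j$ is invertible with $R_j^{-1}=R_{-j}$ and, using $qj=j\bar q$,
\[
R_jR_q=R_{qj}=R_{j\bar q}=R_{\bar q}R_j .
\]
Combining these, I intertwine the two operators: $R_j\Delta_q=R_jR_q-R_jT=R_{\bar q}R_j-R_jT$, and, provided $R_jT=TR_j$, the right-hand side equals $(R_{\bar q}-T)R_j=\Delta_{\bar q}R_j$. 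Hence $\Delta_{\bar q}=R_j\,\Delta_q\,R_j^{-1}$, and since $R_j$ is a surjective isometry of $X$ all five properties transfer in both directions at once.

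The main obstacle is precisely the commutation $R_jT=TR_j$, that is, $T(xj)=(Tx)j$ for all $x\in X$: this is exactly the statement that $T$ commutes with right multiplication by $j$, and I would verify it directly from the right-linearity of $T$. I expect this to be the crux rather than a formality, since for a merely $\mathbb{R}$-right linear operator $T$ the identity $T(xj)=(Tx)j$ genuinely fails, so it is the quaternionic right-linearity of $T$ that makes the intertwining valid. Once the intertwining is established the remainder is routine: $\ker\Delta_{\bar q}=R_j(\ker\Delta_q)$ and $\mathrm{ran}\,\Delta_{\bar q}=R_j(\mathrm{ran}\,\Delta_q)$, while $R_j$ being a homeomorphism preserves closedness and density of the range, which yields each of the asserted equivalences.
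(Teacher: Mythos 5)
Your proof takes exactly the paper's route: the paper's own one-line argument also picks $j\in\mathbb{S}$ with $qj=j\bar{q}$ (asserting its existence from $\bar{q}\in[q]$; your orthogonality construction of $j$ fills this in explicitly) and records the same intertwining in the form $(\Delta_q x)j=\Delta_{\bar{q}}(xj)$ for all $x\in X$, which is precisely your identity $R_j\Delta_q=\Delta_{\bar{q}}R_j$; the transfer of the five properties through the isometric bijection $R_j$ is then routine, as you say.

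The point you flagged as the crux is real, and it deserves to be stated sharply. The commutation $TR_j=R_jT$, i.e.\ $T(xj)=(Tx)j$, cannot be ``verified from the right-linearity of $T$'' under the lemma's stated hypothesis, because the lemma assumes only $T\in\mathcal{B}^{\mathbb{R}}(X)$, that is, $\mathbb{R}$-right linearity --- and, as you observe yourself, the identity genuinely fails for such operators. In fact the lemma is false as literally stated: on $X=\mathbb{H}$ take $Tx:=x\mathrm{i}$, which is bounded and $\mathbb{R}$-right linear; for $q=\mathrm{i}$ one has $\Delta_q x=x\mathrm{i}-x\mathrm{i}=0$, so $\Delta_q$ is neither injective nor surjective nor has dense range, while $\Delta_{\bar{q}}x=-2x\mathrm{i}$ is bijective. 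Consequently your argument (and equally the paper's, whose displayed identity silently uses $T(xj)=(Tx)j$) is a correct and complete proof only when $T\in\mathcal{B}_R(X)$, where the commutation is immediate from quaternionic right linearity; that is also the only setting in which the lemma is applied later (Proposition \ref{5p1} and beyond). In short: same proof as the paper, valid for $T\in\mathcal{B}_R(X)$, and the reservation you raised is not a gap in your argument but a defect in the lemma's hypothesis, which both you and the paper implicitly repair by upgrading $\mathcal{B}^{\mathbb{R}}(X)$ to $\mathcal{B}_R(X)$.
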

\begin{proof}
Since $\bar{q}\in[q]$, there exists $j \in \mathbb{S}$ such that $qj=j\bar{q}$. Then for all $x\in X$, $(\Delta_qx)j=\Delta_{\bar{q}}(xj)$, and the result follows immediately.
\end{proof}
\begin{lemma}
\label{5l2}
Let $T\in\mathcal{B}^{\mathbb{R}}(X)$ and let $q\in\mathbb{H}$. Then 
$$Q_q(T)=\Delta_q\Delta_{\bar{q}}=\Delta_{\bar{q}}\Delta_q.$$
\end{lemma}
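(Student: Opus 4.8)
The plan is to expand the composition $\Delta_q\Delta_{\bar q}=(R_q-T)(R_{\bar q}-T)$ and read off the three terms of $Q_q(T)$. First I would record the elementary algebra of the right-multiplication operators $R_p\colon x\mapsto xp$. Since $\bar q q=q\bar q=\vert q\vert^2$ and $q+\bar q=2\,\text{Re}(q)$ are real, the identities $(x\bar q)q=x\vert q\vert^2=(xq)\bar q$ and $xq+x\bar q=x\big(2\,\text{Re}(q)\big)$ give at once
\begin{equation*}
R_qR_{\bar q}=R_{\bar q}R_q=\vert q\vert^2 I,\qquad R_q+R_{\bar q}=2\,\text{Re}(q)\,I .
\end{equation*}

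Next I would multiply out the composition,
\begin{equation*}
\Delta_q\Delta_{\bar q}=(R_q-T)(R_{\bar q}-T)=R_qR_{\bar q}-R_qT-TR_{\bar q}+T^2=\vert q\vert^2 I-R_qT-TR_{\bar q}+T^2 .
\end{equation*}
The term $\vert q\vert^2 I$ and the term $T^2$ are already two of the three summands of $Q_q(T)$, so the entire statement reduces to showing that the two cross terms collapse to $-2\,\text{Re}(q)\,T$.

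This is the one step carrying any content: I would commute $T$ past right multiplication, replacing $TR_{\bar q}$ by $R_{\bar q}T$. This amounts to the identity $T(x\bar q)=(Tx)\bar q$, i.e. to the fact that $T$ commutes with every $R_p$, which is exactly the right linearity of $T$; I expect this commutation to be the only real obstacle, the rest being bookkeeping. Granting it, $R_qT+TR_{\bar q}=R_qT+R_{\bar q}T=(R_q+R_{\bar q})T=2\,\text{Re}(q)\,T$, whence
\begin{equation*}
\Delta_q\Delta_{\bar q}=T^2-2\,\text{Re}(q)\,T+\vert q\vert^2 I=Q_q(T).
\end{equation*}

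For the remaining equality $\Delta_{\bar q}\Delta_q=Q_q(T)$ I would observe that the same commutation yields $TR_q=R_qT$ and $TR_{\bar q}=R_{\bar q}T$, while $R_qR_{\bar q}=R_{\bar q}R_q$ was noted above; hence $\Delta_q$ and $\Delta_{\bar q}$ commute, and the second identity follows from the first. Equivalently, one repeats the identical expansion with the roles of $q$ and $\bar q$ exchanged, using $q\bar q=\vert q\vert^2$ once more.
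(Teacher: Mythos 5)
Your proposal is correct and is essentially the paper's own proof recast at the operator level: the paper expands $\Delta_q\Delta_{\bar q}x$ pointwise and collects terms, while you expand $(R_q-T)(R_{\bar q}-T)$ using the algebra of the right-multiplication operators; the content is identical, and your closing observation that $\Delta_q$ and $\Delta_{\bar q}$ commute plays the role of the paper's remark that one may interchange $q$ and $\bar q$ since $Q_{\bar q}(T)=Q_q(T)$.

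One point is worth flagging, and it is a point your write-up makes visible while the paper's proof passes over it silently. The crucial step $TR_{\bar q}=R_{\bar q}T$, i.e.\ $T(x\bar q)=(Tx)\bar q$, is (as you correctly say) $\mathbb{H}$-right linearity of $T$; but the lemma as stated only assumes $T\in\mathcal{B}^{\mathbb{R}}(X)$, i.e.\ real-right linearity, which does not give this commutation. The paper uses exactly the same unstated fact in the equality $(x\bar q-Tx)q-T(x\bar q-Tx)=T^2x-(\bar q+q)Tx+q\bar q x$. This is not a defect of your argument relative to the paper's, but a defect of the statement itself: for merely $\mathbb{R}$-right linear $T$ the identity is false. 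Indeed, take $X=\mathbb{H}$, $Tx=x\mathrm{j}$ (which is bounded and $\mathbb{R}$-right linear but not $\mathbb{H}$-right linear) and $q=\mathrm{i}$; then $Q_q(T)=T^2+I=0$, while
\begin{equation*}
\Delta_q\Delta_{\bar q}x=x(-\mathrm{i}-\mathrm{j})(\mathrm{i}-\mathrm{j})=2x\mathrm{k}\neq 0 .
\end{equation*}
So the hypothesis should read $T\in\mathcal{B}_R(X)$ (which is how the lemma is in fact applied later, e.g.\ in Proposition \ref{5p1}); granting that reading, your proof is complete and matches the paper's.
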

\begin{proof}
Let $x\in X$. Then
\begin{align}
\label{5e1}
\Delta_q\Delta_{\bar{q}}x=(\Delta_{\bar{q}}x)q-T\Delta_{\bar{q}}x=(x\bar{q}-Tx)q-T(x\bar{q}-Tx)=T^2x -(\bar{q}+q)Tx + q\bar{q}x=Q_q(T)x.
\end{align}
Hence $Q_q(T)=\Delta_q\Delta_{\bar{q}}$.\\
Since $Q_{\bar{q}}(T)=Q_q(T)$, the remaining equality $Q_q(T)=\Delta_{\bar{q}}\Delta_q$ will be proved by interchanging $q$ and $\bar{q}$ in (\ref{5e1}).
\end{proof}
\begin{lemma}
\label{5l3}
Let $T\in\mathcal{B}^{\mathbb{R}}(X)$ and let $q\in\mathbb{H}$. Then $Q_q(T)$ is injective (resp. surjective, bijective, has dense range) if and only if $\Delta_q$ is.
\end{lemma}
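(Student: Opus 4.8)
The plan is to leverage the factorization $Q_q(T) = \Delta_q\Delta_{\bar q} = \Delta_{\bar q}\Delta_q$ supplied by Lemma \ref{5l2}, together with the symmetry $q \leftrightarrow \bar q$ furnished by Lemma \ref{5l1}. In each of the four cases the two implications split in the same way: one direction holds because a composition of two operators both enjoying a given property again enjoys that property, and the reverse direction holds because a property of the product $Q_q(T)$ forces the corresponding property on one of the factors $\Delta_q$.

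For injectivity I would argue as follows. If $\Delta_q$ is injective, then Lemma \ref{5l1} gives that $\Delta_{\bar q}$ is injective as well, and since $Q_q(T) = \Delta_q\Delta_{\bar q}$ is a composition of injective maps it is injective. Conversely, using $Q_q(T) = \Delta_{\bar q}\Delta_q$, any $x$ with $\Delta_q x = 0$ satisfies $Q_q(T)x = 0$, so injectivity of $Q_q(T)$ forces injectivity of $\Delta_q$. The surjectivity case is dual: if $\Delta_q$ is surjective then so is $\Delta_{\bar q}$ by Lemma \ref{5l1}, whence $Q_q(T) = \Delta_q\Delta_{\bar q}$ is surjective; conversely $\mathcal{R}(Q_q(T)) \subseteq \mathcal{R}(\Delta_q)$ because $Q_q(T) = \Delta_q\Delta_{\bar q}$, so surjectivity of $Q_q(T)$ gives surjectivity of $\Delta_q$. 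Bijectivity is then just the conjunction of the injective and surjective equivalences.

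The dense-range case requires a small additional observation, and this is the step I expect to be the only genuine obstacle, since a composition of two operators with dense range need not obviously have dense range. If $\Delta_q$ has dense range, then $\Delta_{\bar q}$ does too by Lemma \ref{5l1}; from $\overline{\mathcal{R}(\Delta_{\bar q})} = X$ and the continuity of $\Delta_q$ I obtain $\mathcal{R}(\Delta_q) = \Delta_q(X) = \Delta_q(\overline{\mathcal{R}(\Delta_{\bar q})}) \subseteq \overline{\Delta_q(\mathcal{R}(\Delta_{\bar q}))} = \overline{\mathcal{R}(Q_q(T))}$, and taking closures with $\overline{\mathcal{R}(\Delta_q)} = X$ yields $\overline{\mathcal{R}(Q_q(T))} = X$. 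Conversely, from $\mathcal{R}(Q_q(T)) = \Delta_q(\mathcal{R}(\Delta_{\bar q})) \subseteq \mathcal{R}(\Delta_q)$ the density of $\mathcal{R}(Q_q(T))$ immediately forces the density of $\mathcal{R}(\Delta_q)$. Assembling the four equivalences completes the proof.
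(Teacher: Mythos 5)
Your proof is correct and takes essentially the same approach as the paper: the paper's own proof simply cites Lemmas \ref{5l1} and \ref{5l2} and declares all four equivalences clear, and your argument is exactly the detailed verification of that claim, using the factorization $Q_q(T)=\Delta_q\Delta_{\bar q}=\Delta_{\bar q}\Delta_q$ in both orders together with the $q\leftrightarrow\bar q$ symmetry. Your careful handling of the dense-range case (via continuity of $\Delta_q$) correctly fills in the one step the paper leaves implicit.
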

\begin{proof}
By Lemmas \ref{5l1} and \ref{5l2}, it is clear that $Q_q(T)$ is injective (resp. surjective, bijective, has dense range) if and only if $\Delta_q$ is.
\end{proof}
The following proposition is an easy consequence of Lemmas \ref{5l2} and \ref{5l3}.
\begin{proposition}
\label{5p1}
Let $T\in\mathcal{B}_R(X)$. Then 
\begin{itemize}
\item[(i)] $\sigma_{S}(T)=\{q\in\mathbb{H}: \Delta_q\text{ is not bijective}\};$
\item[(ii)] $\sigma_{pS}(T)=\{q\in\mathbb{H}: \mathcal{N}(\Delta_q)\neq\{0\}\};$
\item[(iii)] $\sigma_{rS}(T)= \{q \in\mathbb{H} : \mathcal{N}(\Delta_q)=\{0\}\text{ and }\overline{\mathcal{R}(\Delta_q)}\neq X\};$
\item[(iv)] $\sigma_{cS}(T)= \{q \in\mathbb{H} : \mathcal{N}(\Delta_q)=\{0\},\;\overline{\mathcal{R}(\Delta_q)}= X\text{ and }\mathcal{R}(\Delta_q)\neq X\};$
\item[(v)] $\sigma_{aS}(T)= \{q \in\mathbb{H} : \text{there is a sequence } \{x_n\} \text{ of unit vectors in } X \text{ such that } \Vert \Delta_qx_n\Vert\rightarrow0\};$
\item[(vi)] $\sigma_{sS}(T)= \{q \in\mathbb{H} : \mathcal{R}(\Delta_q)\neq X\}.$
\end{itemize} 
\end{proposition}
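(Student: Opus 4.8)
The plan is to read off parts (i)--(iv) and (vi) directly from Lemma \ref{5l3}, and to treat the approximate point spectrum (v) separately using the factorization in Lemma \ref{5l2} together with Lemma \ref{5l1}.

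For parts (ii), (iii), (iv) and (vi) I would simply rewrite the defining conditions. Lemma \ref{5l3} asserts that $Q_q(T)$ is injective, surjective, bijective or has dense range precisely when $\Delta_q$ has the corresponding property; equivalently $\mathcal{N}(Q_q(T))=\{0\}\Leftrightarrow\mathcal{N}(\Delta_q)=\{0\}$, $\mathcal{R}(Q_q(T))=X\Leftrightarrow\mathcal{R}(\Delta_q)=X$, and $\overline{\mathcal{R}(Q_q(T))}=X\Leftrightarrow\overline{\mathcal{R}(\Delta_q)}=X$. Substituting these three equivalences into the definitions of $\sigma_{pS}(T)$, $\sigma_{rS}(T)$, $\sigma_{cS}(T)$ and $\sigma_{sS}(T)$ from Definition \ref{5d1} gives (ii), (iii), (iv) and (vi) at once. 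For (i) I would first note that, by the bounded inverse theorem applied on the underlying real Banach space, $Q_q(T)$ is invertible in $\mathcal{B}_R(X)$ if and only if it is bijective (one checks directly that the set-theoretic inverse of a bounded right-linear bijection is again right-linear, and boundedness follows from the open mapping theorem); combining this with the bijectivity equivalence of Lemma \ref{5l3} yields $\sigma_S(T)=\{q:\Delta_q\text{ not bijective}\}$.

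The one part not covered by Lemma \ref{5l3} is (v), since the approximate point spectrum is a quantitative rather than a purely algebraic condition. Here I would argue through the notion of being bounded below, recalling that an operator $S\in\mathcal{B}^{\mathbb{R}}(X)$ fails to be bounded below exactly when there is a sequence of unit vectors $\{x_n\}$ with $\Vert Sx_n\Vert\to0$; thus $q\in\sigma_{aS}(T)$ iff $Q_q(T)$ is not bounded below, and it suffices to show this is equivalent to $\Delta_q$ not being bounded below. For one implication, if $\{x_n\}$ are unit vectors with $\Vert\Delta_qx_n\Vert\to0$, then by Lemma \ref{5l2} and submultiplicativity $\Vert Q_q(T)x_n\Vert=\Vert\Delta_{\bar{q}}\Delta_qx_n\Vert\le\Vert\Delta_{\bar{q}}\Vert\,\Vert\Delta_qx_n\Vert\to0$, so $Q_q(T)$ is not bounded below.

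For the converse I would argue by contraposition: suppose $\Delta_q$ is bounded below, i.e.\ injective with closed range. By Lemma \ref{5l1} the operator $\Delta_{\bar{q}}$ is then also injective with closed range, hence bounded below, say $\Vert\Delta_{\bar{q}}y\Vert\ge c\Vert y\Vert$ for all $y$. Applying this with $y=\Delta_qx$ and using Lemma \ref{5l2} gives $\Vert Q_q(T)x\Vert=\Vert\Delta_{\bar{q}}\Delta_qx\Vert\ge c\Vert\Delta_qx\Vert\ge c\,c'\Vert x\Vert$, where $c'$ is a lower bound for $\Delta_q$, so $Q_q(T)$ is bounded below and $q\notin\sigma_{aS}(T)$. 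The step requiring the most care is exactly this reverse implication of (v), where the passage from the factor $\Delta_q$ back to the product $Q_q(T)$ relies on controlling $\Delta_{\bar{q}}$, which is precisely what Lemma \ref{5l1} supplies; the remaining parts are then purely a matter of transcription.
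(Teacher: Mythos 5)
Your proposal is correct and follows essentially the same route as the paper, whose entire proof consists of the remark that the proposition is ``an easy consequence of Lemmas \ref{5l2} and \ref{5l3}'': parts (i)--(iv) and (vi) are indeed pure transcription via Lemma \ref{5l3} (with the open mapping theorem silently handling invertibility in $\mathcal{B}_R(X)$ versus bijectivity in (i)). Your treatment of part (v) --- one direction by submultiplicativity through the factorization of Lemma \ref{5l2}, the converse via the bounded-below/injective-with-closed-range equivalence and the transfer of these properties from $\Delta_q$ to $\Delta_{\bar{q}}$ --- supplies exactly the detail the paper glosses over, and correctly notes that this step really rests on Lemma \ref{5l1} (closed range is absent from the list in Lemma \ref{5l3}), not merely on the two lemmas the paper cites.
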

Let $T\in\mathcal{B}_R(X)$. If $Tx =xq$ for some $q \in\mathbb{H}$ and $x\in X\setminus\{0\}$, then $x$ is called right eigenvector of $T$ with right eigenvalue $q$. The assertion (ii) in the above proposition is well-known (see, for instance, \cite[Proposition 3.1.9]{5ref7} or \cite[Proposition 4.5]{5ref10}).\\

Let $T\in\mathcal{B}_R(X)$ and let $i$ be a fixed element of $\mathbb{S}$. Identify $\mathbb{C}_i$ with $\mathbb{C}$ in the natural way, then $X$ is a complex Banach space with respect to the structure induced by $X$: its sum is the sum of $X$, its complex scalar multiplication is the right scalar multiplication of $X$ restricted to $\mathbb{C}_i$. For short, we denote this space by $X_i$ and $T_i$ the linear operator on $X_i$ defined by $T_i(x):=T(x)$ for all $x\in X$.\\
Note that for all $q\in\mathbb{C}_i$ and all $x\in X$,  $(T_iq)x = T_i(x)q$. We also use the notation $qT_i := T_iq$.
\begin{proposition}
\label{5p5}
Let $T\in\mathcal{B}_R(X)$ and let $q\in\mathbb{C}_i$ then $\Delta_q=I_iq-T_i$. Moreover, if $\Delta_q$ is bijective, then $\Delta_q^{-1}$ is the resolvent of $T_i$ at $q$.
\end{proposition}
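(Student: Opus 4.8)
The plan is to prove the identity $\Delta_q = I_i q - T_i$ by a direct evaluation on vectors, and then to promote the set-theoretic bijectivity of $\Delta_q$ to genuine invertibility in $\mathcal{B}(X_i)$ by means of the open mapping theorem on the complex Banach space $X_i$.

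First I would evaluate both sides on an arbitrary $x \in X$. Using the stated convention $(I_i q)x = I_i(x)q = xq$ together with the definition $T_i(x) = Tx$, one gets
$$(I_i q - T_i)(x) = xq - Tx = R_q(x) - Tx = \Delta_q(x).$$
As $x$ is arbitrary, this yields $\Delta_q = I_i q - T_i$.

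Before addressing invertibility I would record that $\Delta_q$ is a bounded $\mathbb{C}_i$-linear operator on $X_i$, i.e. $\Delta_q\in\mathcal{B}(X_i)$. Additivity is clear; for $\mathbb{C}_i$-linearity I would fix $\lambda\in\mathbb{C}_i$ and use that $T$ is right linear together with the commutativity of $\mathbb{C}_i$ to write $\Delta_q(x\lambda) = x(q\lambda) - (Tx)\lambda = (xq - Tx)\lambda = \Delta_q(x)\lambda$, while boundedness is immediate from $\Vert\Delta_q\Vert\le |q| + \Vert T\Vert$. I would also note that $X_i$ really is a complex Banach space: it carries the same norm and metric as $X$, hence is complete, and the quaternionic norm axiom $\Vert x\lambda\Vert = \Vert x\Vert\,|\lambda|$ for $\lambda\in\mathbb{C}_i$ supplies exactly the homogeneity of a complex norm.

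Finally, assume $\Delta_q$ is bijective. Bijectivity of the underlying map does not depend on which scalars we retain, so $\Delta_q$ is a bounded linear bijection of the complex Banach space $X_i$. The key step is then to upgrade the inverse map to a bounded operator: by the open mapping theorem (equivalently, the bounded inverse theorem) on $X_i$, the inverse $\Delta_q^{-1}$ belongs to $\mathcal{B}(X_i)$. Since $\Delta_q = I_iq - T_i = qI_i - T_i$ by the first part, it follows that $q$ lies in the resolvent set of $T_i$ and that $\Delta_q^{-1} = (qI_i - T_i)^{-1}$ is precisely the resolvent of $T_i$ at $q$. I expect the only real subtlety to be this last invocation, which rests on having checked that $X_i$ is complete and that its norm is a bona fide complex norm, so that the open mapping theorem applies and the purely algebraic bijection becomes a topological isomorphism.
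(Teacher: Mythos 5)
Your proof is correct and follows essentially the same route as the paper's: a direct pointwise evaluation giving $\Delta_q = I_iq - T_i$, followed by transferring bijectivity to conclude that $\Delta_q^{-1}$ is the resolvent of $T_i$ at $q$. The only difference is that you explicitly verify that $X_i$ is a complex Banach space, that $\Delta_q$ is bounded $\mathbb{C}_i$-linear, and that the open mapping theorem makes the inverse bounded—details the paper's two-line proof leaves implicit.
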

\begin{proof}
Let $x\in X$, then $(I_iq-T_i)x=(I_iq)x-T_ix=xq-Tx=\Delta_qx$. Hence $\Delta_q=I_iq-T_i$. If $\Delta_q$ is bijective, then so is $I_iq-T_i$. Thus $\Delta_q^{-1}=(I_iq-T_i)^{-1}$ is the resolvent of $T_i$ at $q$.
\end{proof}
\begin{theorem}
\label{5t1}
Let $T\in\mathcal{B}_R(X)$ and let $q\in\mathbb{H}$ with $\limsup_{n\rightarrow+\infty}\Vert T^n\Vert^{\frac{1}{n}} < \vert q\vert$. Then
$$Q_q(T)^{-1}=\sum_{n=0}^{+\infty}\big(\sum_{k=0}^{n}q^{-k-1}\bar{q}^{\;-n+k-1}\big)T^n,$$
where this series converges in the operator norm.
\end{theorem}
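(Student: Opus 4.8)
The plan is to invert the two factors in the identity $Q_q(T)=\Delta_q\Delta_{\bar q}$ (Lemma \ref{5l2}) separately by a Neumann series and then multiply. First I would record the elementary facts about the right multiplications $R_q$: one has $\|R_q\|=|q|$, $R_q^{-1}=R_{q^{-1}}$, $R_pR_{p'}=R_{p'p}$ and $R_p^{\,n}=R_{p^n}$, and, crucially, since $T$ is right $\mathbb{H}$-linear, $T$ commutes with every $R_p$. Consequently $\Delta_q=R_q-T=R_q\bigl(I-R_{q^{-1}}T\bigr)$, and $(R_{q^{-1}}T)^n=R_{q^{-n}}T^n$ has norm at most $|q|^{-n}\|T^n\|$, so the hypothesis $\limsup_n\|T^n\|^{1/n}<|q|$ forces $\limsup_n\|(R_{q^{-1}}T)^n\|^{1/n}<1$. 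Hence $I-R_{q^{-1}}T$ is invertible in the real Banach algebra $\mathcal{B}^{\mathbb{R}}(X)$ with Neumann series $\sum_n R_{q^{-n}}T^n$, and therefore
$$\Delta_q^{-1}=\bigl(I-R_{q^{-1}}T\bigr)^{-1}R_{q^{-1}}=\sum_{n=0}^{\infty}R_{q^{-n-1}}T^n.$$
Since $|\bar q|=|q|$, the identical argument yields $\Delta_{\bar q}^{-1}=\sum_{m=0}^{\infty}R_{\bar q^{-m-1}}T^m$.

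Next, Lemma \ref{5l2} gives $Q_q(T)=\Delta_q\Delta_{\bar q}=\Delta_{\bar q}\Delta_q$, so the two factors commute and are both invertible; thus $Q_q(T)^{-1}=\Delta_q^{-1}\Delta_{\bar q}^{-1}$. I would multiply the two series by the Cauchy product, which is legitimate because both converge absolutely in operator norm. Pushing $R_{\bar q^{-m-1}}$ past $T^n$ (they commute) and using $R_{q^{-n-1}}R_{\bar q^{-m-1}}=R_{q^{-n-1}\bar q^{-m-1}}$ (note $q$ and $\bar q$ commute), grouping the terms with $n+m=N$ gives
$$Q_q(T)^{-1}=\sum_{n,m\ge 0}R_{q^{-n-1}\bar q^{-m-1}}T^{n+m}=\sum_{N=0}^{\infty}R_{c_N}T^N,\qquad c_N:=\sum_{k=0}^{N}q^{-k-1}\bar q^{-N+k-1}.$$

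Two points then remain. First, each coefficient $c_N$ is in fact real: applying the conjugation anti-automorphism and reindexing $k\mapsto N-k$ shows $\overline{c_N}=c_N$, so $R_{c_N}=c_N I$ and right multiplication by $c_N$ coincides with the left multiplication written in the statement (this reality is also what makes the sum right $\mathbb{H}$-linear, as it must be). Second, for convergence I would estimate $|c_N|\le (N+1)|q|^{-N-2}$ and, choosing $\rho$ with $\limsup_n\|T^n\|^{1/n}<\rho<|q|$, bound $\|T^N\|\le C\rho^N$ for a suitable constant $C$; then $\|R_{c_N}T^N\|\le C(N+1)|q|^{-2}(\rho/|q|)^N$, a summable bound, giving convergence in operator norm.

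The only genuinely delicate points are bookkeeping rather than conceptual: keeping the order of the right multiplications straight (they compose in reverse, $R_pR_{p'}=R_{p'p}$) and justifying the term-by-term multiplication of the two absolutely convergent series. Everything else reduces to the commutation of $T$ with the maps $R_p$ and to the scalar Neumann series, so I expect the main effort to go into presenting the index manipulation that collapses the double sum into the single coefficient $c_N$ cleanly.
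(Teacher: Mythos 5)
Your proof is correct, but it takes a genuinely different technical route from the paper's. The paper first uses the axial symmetry of $Q_q(T)$ (i.e. $Q_p(T)=Q_q(T)$ for all $p\in[q]$) to reduce to the case $q\in\mathbb{C}_i$, then views $X$ as the complex Banach space $X_i$ and, via Proposition \ref{5p5}, identifies $\Delta_q=I_iq-T_i$ and $\Delta_{\bar q}=I_i\bar q-T_i$ as complex-linear operators whose inverses come for free from the classical resolvent Neumann series in $\mathcal{B}(X_i)$; the Cauchy product and the reality of the coefficients $\sum_{k=0}^{n}q^{-k-1}\bar q^{\;-n+k-1}$ then finish the argument exactly as in your last two paragraphs. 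You instead stay entirely in the real Banach algebra $\mathcal{B}^{\mathbb{R}}(X)$: you factor $\Delta_q=R_q\bigl(I-R_{q^{-1}}T\bigr)$, run the Neumann series there, and exploit the commutation of $T$ with the right multiplications $R_p$. The skeleton is the same in both arguments (Lemma \ref{5l2}, invert each factor by a Neumann series, multiply, observe the coefficients are real so that right multiplication becomes left scalar multiplication), but your version needs no choice of $i\in\mathbb{S}$ and no reduction to a slice, at the price of re-deriving the resolvent series by hand in the real algebra; the paper's version is shorter and fits its guiding theme of reducing quaternionic statements to classical complex operator theory, but it leans on the slice identification. Both are complete proofs; your explicit estimate $|c_N|\le (N+1)|q|^{-N-2}$ together with $\Vert T^N\Vert\le C\rho^N$ actually justifies the operator-norm convergence of the final series more directly than the paper, which simply inherits it from the convergence of the two factor series.
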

\begin{proof}
Note that for all $p\in[q]$, $Q_p(T)=Q_q(T)$. Then we may assume that $q\in\mathbb{C}_i$. Since 
$$\limsup_{n\rightarrow+\infty}\Vert T_i^n\Vert^{\frac{1}{n}}=\limsup_{n\rightarrow+\infty}\Vert T^n\Vert^{\frac{1}{n}} < \vert q\vert,$$
$I_iq-T_i=\Delta_q$ and $I_i\bar{q}-T_i=\Delta_{\bar{q}}$ are invertible in $\mathcal{B}(X_i)$. Moreover,
\begin{align}
\Delta_q^{-1}=(I_iq-T_i)^{-1}=\sum_{n=0}^{+\infty}T_i^nq^{-n-1}\text{ and }\Delta_{\bar{q}}^{-1}=(I_i\bar{q}-T_i)^{-1}=\sum_{n=0}^{+\infty}T_i^n\bar{q}^{-n-1}.\label{5eq1}
\end{align}
Then  
$$Q_q(T)^{-1}=\Delta_q^{-1}\Delta_{\bar{q}}^{-1}=\sum_{n=0}^{+\infty}T_i^n\sum_{k=0}^{n}q^{-k-1}{\bar{q}}^{\;-n+k-1}.$$
Since $\sum_{k=0}^{n}q^{-k-1}{\bar{q}}^{\;-n+k-1}=\overline{\sum_{k=0}^{n}q^{-k-1}{\bar{q}}^{\;-n+k-1}}$, $\sum_{k=0}^{n}q^{-k-1}{\bar{q}}^{\;-n+k-1}\in\mathbb{R}$. Hence
$$T_i^n\sum_{k=0}^{n}q^{-k-1}{\bar{q}}^{\;-n+k-1}=T^n\sum_{k=0}^{n}q^{-k-1}{\bar{q}}^{\;-n+k-1}\;\;\text{ for all }\; n\in\mathbb{N}.$$
Thus 
$$Q_q(T)^{-1}=\sum_{n=0}^{+\infty}\big(\sum_{k=0}^{n}q^{-k-1}\bar{q}^{\;-n+k-1}\big)T^n,$$
this series converges in the operator norm, because so are the series in (\ref{5eq1}). This completes the proof.
\end{proof}
\begin{remark}
\emph{Let $q\in\mathbb{H}$ and let $a_n:=\sum_{k=0}^{n}q^{-k-1}{\bar{q}}^{\;-n+k-1}$. Since $a_n=\overline{a_n}$, we can write 
$$Q_q(T)^{-1}=\sum_{n=0}^{+\infty}T^n\big(\sum_{k=0}^{n}q^{-k-1}\bar{q}^{\;-n+k-1}\big).$$}
\end{remark}
\begin{proposition}
\label{5p3}
Let $T\in\mathcal{B}_R(X)$. Then 
$$\sigma_S(T)=[\sigma(T_i)].$$
\end{proposition}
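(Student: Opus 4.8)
The plan is to reduce everything to the bijectivity criterion already recorded in Proposition \ref{5p1}(i), namely $\sigma_S(T)=\{q\in\mathbb{H}:\Delta_q\text{ is not bijective}\}$, and then to transport the notion of bijectivity on the complex slice $\mathbb{C}_i$ to invertibility of the complex operator $I_iq-T_i$ in $\mathcal{B}(X_i)$. The first and essential step is to establish the slice identity
$$\sigma_S(T)\cap\mathbb{C}_i=\sigma(T_i).$$
For $q\in\mathbb{C}_i$, Proposition \ref{5p5} gives $\Delta_q=I_iq-T_i$. Since $X_i$ is a complex Banach space and $\Delta_q$ is a bounded operator on it, the bounded inverse theorem shows that $\Delta_q$ is bijective if and only if it is invertible in $\mathcal{B}(X_i)$, i.e. if and only if $q\in\rho(T_i)$. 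Feeding this into Proposition \ref{5p1}(i) yields exactly the displayed equality.

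Granting the slice identity, the two inclusions of the theorem follow by elementary bookkeeping with the axial symmetry of $\sigma_S(T)$ (Proposition \ref{5p2}). For $[\sigma(T_i)]\subseteq\sigma_S(T)$, I would take $p\in\sigma(T_i)\subseteq\mathbb{C}_i$; the slice identity places $p\in\sigma_S(T)$, and axial symmetry then gives $[p]\subseteq\sigma_S(T)$, so taking the union over $p\in\sigma(T_i)$ produces $[\sigma(T_i)]\subseteq\sigma_S(T)$. For the reverse inclusion, I would take $q\in\sigma_S(T)$ and consider the distinguished representative $q_i:=\text{Re}(q)+\vert\text{Im}(q)\vert i$, which is the point where the conjugacy class $[q]=\text{Re}(q)+\vert\text{Im}(q)\vert\mathbb{S}$ meets $\mathbb{C}_i$. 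By axial symmetry $q_i\in\sigma_S(T)\cap\mathbb{C}_i=\sigma(T_i)$, and since $[q]=[q_i]$ this gives $q\in[q_i]\subseteq[\sigma(T_i)]$.

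I do not expect any genuine obstacle here: once Propositions \ref{5p1}, \ref{5p5} and \ref{5p2} are in hand, the argument is a clean translation. The only two points needing care are the passage from ``$\Delta_q$ not bijective'' to ``$q\in\sigma(T_i)$'' for $q\in\mathbb{C}_i$, which rests on the bounded inverse theorem applied on the complex Banach space $X_i$, and the elementary geometric fact that every conjugacy class $[q]$ meets $\mathbb{C}_i$ in the two points $\text{Re}(q)\pm\vert\text{Im}(q)\vert i$. It is this latter fact, combined with the axial symmetry of $\sigma_S(T)$, that lets me pull an arbitrary $q\in\sigma_S(T)$ back to a genuinely complex spectral point of $T_i$, which is what makes the identification $\sigma_S(T)=[\sigma(T_i)]$ possible.
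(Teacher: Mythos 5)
Your proposal is correct and follows essentially the same route as the paper: both identify $\sigma_S(T)\cap\mathbb{C}_i$ with $\sigma(T_i)$ via Propositions \ref{5p1}(i) and \ref{5p5}, then use the axial symmetry of $\sigma_S(T)$ (Proposition \ref{5p2}) to pass between a quaternion $q$ and a representative of $[q]$ in $\mathbb{C}_i$. The only cosmetic difference is that you exhibit the representative explicitly as $\mathrm{Re}(q)+\vert\mathrm{Im}(q)\vert i$, whereas the paper produces it by conjugating $q$ into $\mathbb{C}_i$; these are the same point.
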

\begin{proof}
By Proposition \ref{5p1}, $\sigma(T_i)\subseteq\sigma_S(T)$. Since $\sigma_S(T)$ is axially symmetric, $[\sigma(T_i)]\subseteq\sigma_S(T)$.\\
Conversely, let $q\in\sigma_S(T)$. Then there exists $s\in\mathbb{S}$ such that $sq\bar{s}\in\mathbb{C}_i\cap\sigma_S(T)$, then by Proposition \ref{5p5}, $sq\bar{s}\in\sigma(T_i)$. Thus $q\in[\sigma(T_i)]$.
\end{proof}
\begin{theorem}[Compactness of the S-spectrum]
Let $T\in\mathcal{B}_R(X)$. The S-spectrum $\sigma_S(T)$ of $T$ is a nonempty compact set contained in the closed ball $\{q\in\mathbb{H}:\vert q\vert\leq\Vert T\Vert\}$.
\end{theorem}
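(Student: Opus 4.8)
The plan is to reduce everything to the classical spectral theory of the complex bounded operator $T_i$ on $X_i$, exploiting the identity $\sigma_S(T)=[\sigma(T_i)]$ established in Proposition \ref{5p3}. First I would record that $X_i$ is a genuine (nonzero) complex Banach space and that $T_i\in\mathcal{B}(X_i)$ with $\Vert T_i\Vert=\sup_{\Vert x\Vert=1}\Vert T_i x\Vert=\sup_{\Vert x\Vert=1}\Vert Tx\Vert=\Vert T\Vert$, since $X_i$ shares the underlying set, addition and norm of $X$ and $T_i x=Tx$. Consequently the three classical facts apply at once: $\sigma(T_i)$ is a nonempty compact subset of $\mathbb{C}_i\cong\mathbb{C}$ contained in $\{\lambda:\vert\lambda\vert\le r(T_i)\le\Vert T_i\Vert=\Vert T\Vert\}$.

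Next I would transfer these properties to $\sigma_S(T)=[\sigma(T_i)]$ one at a time. For nonemptiness, since $[\sigma(T_i)]\supseteq\sigma(T_i)$ and $\sigma(T_i)\neq\emptyset$, we get $\sigma_S(T)\neq\emptyset$. For the norm bound, every $q\in[\sigma(T_i)]$ satisfies $q\in[\lambda]$ for some $\lambda\in\sigma(T_i)$, whence $\vert q\vert=\vert\lambda\vert\le\Vert T\Vert$ (recall that conjugated quaternions share the same modulus), so $\sigma_S(T)\subseteq\{q\in\mathbb{H}:\vert q\vert\le\Vert T\Vert\}$.

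The one step that is not purely classical is the closedness/compactness of the saturated set $[\sigma(T_i)]$, and this is where I expect the only genuine work. I would handle it by exhibiting $[\sigma(T_i)]$ as a continuous image of a compact set: define $\psi:\mathbb{C}_i\times\mathbb{S}\to\mathbb{H}$ by $\psi(\lambda,s)=\text{Re}(\lambda)+\vert\text{Im}(\lambda)\vert\,s$, which is continuous, and observe from the definition $[\lambda]=\text{Re}(\lambda)+\vert\text{Im}(\lambda)\vert\mathbb{S}$ that $[\sigma(T_i)]=\psi(\sigma(T_i)\times\mathbb{S})$. Since $\sigma(T_i)$ is compact in $\mathbb{C}_i$ and the sphere $\mathbb{S}$ is compact, the product $\sigma(T_i)\times\mathbb{S}$ is compact, so its continuous image $\sigma_S(T)$ is compact in $\mathbb{H}$. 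Combined with the two preceding paragraphs, this shows that $\sigma_S(T)$ is a nonempty compact subset of $\{q\in\mathbb{H}:\vert q\vert\le\Vert T\Vert\}$, as claimed.

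One subtlety worth flagging is that the classical nonemptiness of the spectrum requires $X_i\neq\{0\}$, i.e. $X\neq\{0\}$, which is the standing assumption; for the degenerate space the statement fails since $\sigma_S(T)$ would be empty. Everything else in the argument is independent of this and relies only on Proposition \ref{5p3} together with the elementary topology of the conjugacy spheres $[\lambda]$.
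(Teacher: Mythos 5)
Your proof is correct, and its backbone is the same as the paper's: both rest on Proposition \ref{5p3}, i.e.\ $\sigma_S(T)=[\sigma(T_i)]$, combined with the classical fact that $\sigma(T_i)$ is a nonempty compact subset of $\mathbb{C}_i$. You diverge from the paper in two places. First, for the inclusion $\sigma_S(T)\subseteq\{q\in\mathbb{H}:\vert q\vert\leq\Vert T\Vert\}$ the paper invokes Theorem \ref{5t1} (the series for $Q_q(T)^{-1}$ converges whenever $\vert q\vert>\limsup_{n}\Vert T^n\Vert^{1/n}$, hence whenever $\vert q\vert>\Vert T\Vert$), whereas you deduce it from the classical bound $\sigma(T_i)\subseteq\{\lambda:\vert\lambda\vert\leq\Vert T_i\Vert=\Vert T\Vert\}$ together with the observation that conjugated quaternions have equal modulus. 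Your route is more consistent with the reduction-to-complex-theory philosophy and avoids the quaternionic series computation entirely; the paper's route has the merit of not depending on Proposition \ref{5p3} for that particular inclusion. Second, the paper merely asserts that $[\sigma(T_i)]$ is compact in $\mathbb{H}$, while you supply the justification: $[\sigma(T_i)]=\psi\big(\sigma(T_i)\times\mathbb{S}\big)$ with $\psi(\lambda,s)=\mathrm{Re}(\lambda)+\vert\mathrm{Im}(\lambda)\vert s$ continuous and $\sigma(T_i)\times\mathbb{S}$ compact, so the image is compact. That is exactly the right way to fill this gap, and your remark that nonemptiness of the spectrum requires $X\neq\{0\}$ is a legitimate, if standard, point that the paper leaves implicit.
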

\begin{proof}
It is well-known that $\sigma(T_i)$ is a nonempty compact subset of $\mathbb{C}_i$. Hence $[\sigma(T_i)]$ is compact in $\mathbb{H}$. Then by Proposition \ref{5p3}, $\sigma_S(T)$ is a nonempty compact set. That $\sigma_S(T)$ contained in the closed ball $\{q\in\mathbb{H}:\vert q\vert\leq\Vert T\Vert\}$ follows from Theorem \ref{5t1}. 
\end{proof}
Let $T \in \mathcal{B}_R(X)$. Then the S-spectral radius of $T$ is defined to be the nonnegative real number
$$r_S(T) := \sup\{\vert q\vert : q \in \sigma_S(T)\}.$$
\begin{proposition}
Let $T\in\mathcal{B}_R(X)$. Then 
$$r_S(T)=r(T_i).$$
In particular, $$r_S(T)=\lim_{n\rightarrow\infty}\Vert T^n\Vert^{\frac{1}{n}}.$$
\end{proposition}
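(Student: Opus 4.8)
The plan is to read off both equalities from Proposition \ref{5p3} together with the classical (Gelfand) spectral radius formula for the complex operator $T_i$. First I would establish $r_S(T)=r(T_i)$ purely from the set identity $\sigma_S(T)=[\sigma(T_i)]$, and then I would obtain the limit expression from Gelfand's formula applied to $T_i\in\mathcal{B}(X_i)$.

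For the first equality, recall that by Proposition \ref{5p3} we have $\sigma_S(T)=[\sigma(T_i)]=\bigcup_{\lambda\in\sigma(T_i)}[\lambda]$. The key observation is that passing from a quaternion to its conjugacy class does not change the modulus: if $p\in[\lambda]$ then $\vert p\vert=\vert\lambda\vert$, as noted just before Proposition \ref{5p2}. Hence the modulus is constant on each sphere $[\lambda]$, and therefore
$$r_S(T)=\sup\{\vert q\vert:q\in\sigma_S(T)\}=\sup_{\lambda\in\sigma(T_i)}\;\sup_{p\in[\lambda]}\vert p\vert=\sup\{\vert\lambda\vert:\lambda\in\sigma(T_i)\}=r(T_i).$$

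For the limit formula, I would note that $T$, being right linear, is in particular $\mathbb{C}_i$-right linear, so $T_i$ is a bounded $\mathbb{C}$-linear operator on the complex Banach space $X_i$. Since $T_i^nx=T^nx$ for all $x\in X$ and the norm of $X_i$ is the restriction of the norm of $X$, the unit balls coincide and $\Vert T_i^n\Vert=\Vert T^n\Vert$ for every $n$ (this identity was already used in the proof of Theorem \ref{5t1}). The classical spectral radius formula then gives $r(T_i)=\lim_{n\rightarrow\infty}\Vert T_i^n\Vert^{\frac{1}{n}}=\lim_{n\rightarrow\infty}\Vert T^n\Vert^{\frac{1}{n}}$, and combining this with the first equality yields the claim.

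I do not expect a genuine obstacle here: the statement is essentially a bookkeeping consequence of the set equality $\sigma_S(T)=[\sigma(T_i)]$ and the invariance of the modulus along conjugacy classes. The only point requiring a word of care is the identity $\Vert T_i^n\Vert=\Vert T^n\Vert$, which holds because $X_i$ and $X$ share the same underlying normed abelian group and $T_i$ agrees with $T$ as a map; once this is in hand, Gelfand's formula for operators on a complex Banach space does all the remaining work.
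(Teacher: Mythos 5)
Your proposal is correct and follows essentially the same route as the paper's own proof: both deduce $r_S(T)=r(T_i)$ from the identity $\sigma_S(T)=[\sigma(T_i)]$ of Proposition \ref{5p3} together with the invariance of the modulus on conjugacy classes, and then invoke Gelfand's formula for $T_i$ using $\Vert T_i^n\Vert=\Vert T^n\Vert$. No gaps; your extra remark justifying $\Vert T_i^n\Vert=\Vert T^n\Vert$ is a detail the paper leaves implicit.
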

\begin{proof}
By Proposition \ref{5p3}, we have $\sigma_S(T)=[\sigma(T_i)]$. Clearly if $p,q\in\mathbb{H}$ are conjugate, then $\vert p\vert=\vert q\vert$. Hence $\{\vert q\vert : q \in \sigma_S(T)\}=\{\vert q\vert : q \in \sigma(T_i)\}$. Thus 
$$r_S(T) := \sup\{\vert q\vert : q \in \sigma(T_i)\}.$$
And so $$r_S(T)=\lim_{n\rightarrow\infty}\Vert T_i^n\Vert^{\frac{1}{n}}=\lim_{n\rightarrow\infty}\Vert T^n\Vert^{\frac{1}{n}}.$$
\end{proof}
Let $P(q) =\sum_{i=0}^{n}\alpha_iq^i$ be a polynomial with coefficients $\alpha_i\in\mathbb{R}$. For $T\in\mathcal{B}_R(X)$ we set $P(T) =\sum_{i=0}^{n}\alpha_iT^i$. The S-spectrum of $T$ and $P(T)$ and related in the following way:
\begin{theorem}[The spectral mapping theorem for polynomials]
\label{5t10}
Let $T \in\mathcal{B}_R(X)$ and let $P$ be a polynomial with real coefficients. Then
$$\sigma_S(P(T))=P(\sigma_S(T)):=\{P(q):q\in\sigma_S(T)\}.$$
\end{theorem}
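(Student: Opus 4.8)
The plan is to reduce the whole statement to the classical spectral mapping theorem for the complex operator $T_i$, exactly in the spirit of Proposition \ref{5p3}. First I would note that, since $P$ has real coefficients and $T$ is right linear, $P(T)=\sum_k\alpha_kT^k$ again lies in $\mathcal{B}_R(X)$, so Proposition \ref{5p3} is applicable to it. The key compatibility to record is that passing to the complex operator commutes with forming $P$: because the $\alpha_k$ are real (hence act as ordinary scalars of $\mathbb{C}_i$ on $X_i$) and $T_i^k=(T^k)_i$, one checks directly that $(P(T))_i=P(T_i)$ as elements of $\mathcal{B}(X_i)$.

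Granting this, Proposition \ref{5p3} applied to $P(T)$ gives
$$\sigma_S(P(T))=[\sigma((P(T))_i)]=[\sigma(P(T_i))].$$
Since $T_i$ is an ordinary bounded complex-linear operator and $P$ a (real) polynomial, the polynomial case of the spectral mapping property recalled in the Introduction yields $\sigma(P(T_i))=P(\sigma(T_i))$, so that $\sigma_S(P(T))=[P(\sigma(T_i))]$. On the other side, again by Proposition \ref{5p3}, $P(\sigma_S(T))=P([\sigma(T_i)])$. Thus the theorem comes down to the set-theoretic identity $P([\sigma(T_i)])=[P(\sigma(T_i))]$.

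The heart of the matter, and the step I expect to require the most care, is the sphere-level lemma: for every $q\in\mathbb{H}$ and every real polynomial $P$ one has $P([q])=[P(q)]$. Here one uses that a real polynomial is compatible with conjugation in $\mathbb{H}$: if $p=sqs^{-1}$ with $s\in\mathbb{H}\setminus\{0\}$, then $P(p)=sP(q)s^{-1}$, which already gives $P([q])\subseteq[P(q)]$. For the reverse inclusion I would write $q=\alpha+\beta j$ with $\alpha=\text{Re}(q)$, $\beta=\vert\text{Im}(q)\vert\geq 0$, $j\in\mathbb{S}$, and evaluate $P$ on the slice $\mathbb{C}_j$: because $P$ has real coefficients there are real numbers $P_0(\alpha,\beta)$, $P_1(\alpha,\beta)$, independent of $j$, with $P(\alpha+\beta j)=P_0(\alpha,\beta)+P_1(\alpha,\beta)\,j$, whence $\text{Re}(P(q))=P_0$ and $\vert\text{Im}(P(q))\vert=\vert P_1\vert$. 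As $j$ runs over $\mathbb{S}$ the element $P_1j$ runs over $\vert P_1\vert\mathbb{S}$ (since $\mathbb{S}$ is invariant under $j\mapsto-j$), so $P([q])=P_0+\vert P_1\vert\mathbb{S}=[P(q)]$; the degenerate case $\beta=0$ is immediate, as then $[q]=\{q\}$ and $P(q)\in\mathbb{R}$.

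Finally, using $[\sigma(T_i)]=\bigcup_{\lambda\in\sigma(T_i)}[\lambda]$ and applying the lemma termwise gives
$$P([\sigma(T_i)])=\bigcup_{\lambda\in\sigma(T_i)}P([\lambda])=\bigcup_{\lambda\in\sigma(T_i)}[P(\lambda)]=[P(\sigma(T_i))],$$
which is precisely the identity needed to close the argument, since both $\sigma_S(P(T))$ and $P(\sigma_S(T))$ have now been identified with $[P(\sigma(T_i))]$. The only genuinely quaternionic input is the sphere lemma; everything else is the complex theory transported through Proposition \ref{5p3}.
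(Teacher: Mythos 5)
Your proposal is correct and takes essentially the same route as the paper: reduce to the complex operator $T_i$ via Proposition \ref{5p3}, apply the classical spectral mapping theorem to get $\sigma(P(T_i))=P(\sigma(T_i))$, and conclude through the identity $P([\sigma(T_i)])=[P(\sigma(T_i))]$. The only difference is that the paper asserts this last identity (and the compatibility $(P(T))_i=P(T_i)$) without comment, whereas you prove the sphere-level lemma $P([q])=[P(q)]$ in detail --- a useful filling-in of the step the paper leaves implicit, not a different approach.
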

\begin{proof}
We have $\sigma(P(T_i))=P(\sigma(T_i))$. Then by Proposition \ref{5p3}, $\sigma_S(P(T))=[P(\sigma(T_i))]=P([\sigma(T_i)])=P(\sigma_S(T)).$
\end{proof}
\begin{remark}
\emph{Note that Theorem \ref{5t10} would be fail if the polynomial coefficients are not real. To see this, consider the polynomial $P(q) =\mathrm{i}q$ and $I$ the identity operator on $X$. Then $\sigma_S(P(I))=\sigma_S(\mathrm{i} I)=\mathbb{S}$ and $P(\sigma_S(I))=\{\mathrm{i}\}$.}
\end{remark}
\section{S-functional calculus}
In this section we will see that the S-functional calculus can be obtained by the Riesz-Dunford functional calculus for complex linear operators.
\begin{definition}[{\cite[Definition 2.1.30]{5ref7}}, Slice Cauchy domain]
An axially symmetric open set $D \subset\mathbb{H}$ is called a slice Cauchy domain if $D \cap\mathbb{C}_i$ is a Cauchy domain in $\mathbb{C}_i$ for every $i\in\mathbb{S}$. More precisely, $D$ is a slice Cauchy domain if for every $i\in\mathbb{S}$ the boundary $\partial(D\cap\mathbb{C}_i)$ of $D\cap\mathbb{C}_i$ is the union of a finite number of nonintersecting piecewise continuously differentiable Jordan curves in $\mathbb{C}_i$.
\end{definition}

Let $T\in\mathcal{B}_R(X)$ and let $D$ be a bounded slice Cauchy domain that contains $\sigma_S(T)$. Let $f$ be an intrinsic slice hyperholomorphic function with $\overline{D}\subset\mathcal{D}(f)$, let $f_i:=f\vert_{\mathcal{D}(f)\cap\mathbb{C}_i}$ and $D_i:=D\cap\mathbb{C}_i$ ($D_i$ is a bounded slice Cauchy domain in $\mathbb{C}_i$). Since $\sigma(T_i)=\sigma_S(T)\cap\mathbb{C}_i$ and $\Delta_s^{-1}$ \big($s\in\rho_S(T)\cap\mathbb{C}_i$\big) is the resolvent of $T_i$,
\begin{align}
f_i(T_i)=\frac{1}{2\pi i}\int_{\partial D_i}\Delta_s^{-1}f_i(s)ds. \label{5e4}
\end{align}
\begin{definition}[{\cite[Definition 3.1.10]{5ref7}}]
Let $T\in\mathcal{B}_R(X)$. For $s\in\sigma_S(T)$, we define the left S-resolvent operator as
$$S_L^{-1}(s,T):=-Q_s(T)^{-1}(T-\bar{s}I),$$
and the right S-resolvent operator as
$$S_R^{-1}(s,T):=-(T-\bar{s}I)Q_s(T)^{-1}.$$
\end{definition}
\begin{theorem}[{\cite[Theorem 3.1.6]{5ref7}}]
Let $T\in\mathcal{B}_R(X)$ and let $s\in\mathbb{H}$ with $\Vert T\Vert<\vert s\vert$. Then 
$$S_L^{-1}(s,T)=\sum_{n=0}^{+\infty}T^ns^{-n-1};$$
$$S_R^{-1}(s,T)=\sum_{n=0}^{+\infty}s^{-n-1}T^n.$$
These series converge in the operator norm.
\end{theorem}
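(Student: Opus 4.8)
The plan is to feed the operator-norm-convergent expansion of $Q_s(T)^{-1}$ supplied by Theorem~\ref{5t1} into the two definitions and then read off the coefficients. Since $\limsup_{n}\|T^n\|^{\frac1n}\le\|T\|<|s|$, Theorem~\ref{5t1} applies: $Q_s(T)$ is invertible and
$$Q_s(T)^{-1}=\sum_{n=0}^{+\infty}a_nT^n,\qquad a_n:=\sum_{k=0}^{n}s^{-k-1}\bar{s}^{\,-n+k-1}\in\mathbb{R},$$
the series converging in $\mathcal B_R(X)$. In particular $S_L^{-1}(s,T)$ and $S_R^{-1}(s,T)$ are well-defined bounded operators. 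I would first dispose of convergence of the two target series by the crude comparison $\sum_n\|T^n\|\,|s|^{-n-1}\le|s|^{-1}\sum_n(\|T\|/|s|)^n<\infty$, which legitimizes every rearrangement below.

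Next I would expand $S_L^{-1}(s,T)=-Q_s(T)^{-1}(T-\bar sI)$. The summand $-Q_s(T)^{-1}T$ gives $-\sum_n a_nT^{n+1}$, and the summand $Q_s(T)^{-1}\bar sI$ gives $\sum_n a_nT^n\bar s$. Here the right linearity of $T$ is what makes the bookkeeping work: $T^n(x\bar s)=(T^nx)\bar s$, so $\bar s$ passes to the right of $T^n$ as a right coefficient, and since $a_n\in\mathbb R$ it commutes with $\bar s$. Reindexing and collecting the coefficient of $T^n$ yields
$$S_L^{-1}(s,T)=\sum_{n=0}^{+\infty}T^n\bigl(a_n\bar s-a_{n-1}\bigr),\qquad a_{-1}:=0.$$

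The crux is then the telescoping identity $a_n\bar s-a_{n-1}=s^{-n-1}$. Since $s$ and $\bar s$ commute (they lie in the commutative subfield $\mathbb C_i$ for the $i\in\mathbb S$ with $s\in\mathbb C_i$, and $s\bar s=\bar s s=|s|^2$), one has $a_n\bar s=\sum_{k=0}^{n}s^{-k-1}\bar s^{\,-n+k}$ while $a_{n-1}=\sum_{k=0}^{n-1}s^{-k-1}\bar s^{\,-n+k}$, so all terms cancel except the $k=n$ term $s^{-n-1}$. This gives $S_L^{-1}(s,T)=\sum_n T^ns^{-n-1}$. The argument for $S_R^{-1}(s,T)=-(T-\bar sI)Q_s(T)^{-1}$ is the mirror image: the coefficient $\bar s$ now sits to the \emph{left} of $T^n$, and the same commutativity gives $\bar s a_n-a_{n-1}=s^{-n-1}$, whence $S_R^{-1}(s,T)=\sum_n s^{-n-1}T^n$.

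I expect the only real subtlety to be this left/right bookkeeping of the quaternionic coefficient $\bar s$---keeping it on the right of $T^n$ for $S_L^{-1}$ and on the left for $S_R^{-1}$ (exactly the left/right slice asymmetry)---together with the commutativity $s\bar s=\bar s s$ that drives the telescope. As a cross-check for $S_L^{-1}$, one may instead invoke Lemma~\ref{5l2}: in the complex realization $X_i$ one has $T-\bar sI=-\Delta_{\bar s}$, hence $S_L^{-1}(s,T)=Q_s(T)^{-1}\Delta_{\bar s}=\Delta_s^{-1}\Delta_{\bar s}^{-1}\Delta_{\bar s}=\Delta_s^{-1}$, and the stated series is then precisely the Neumann series~(\ref{5eq1}) for $\Delta_s^{-1}$.
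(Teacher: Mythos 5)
The first thing to note is that the paper contains no proof of this theorem at all: it is quoted verbatim from \cite[Theorem 3.1.6]{5ref7}. So your derivation---substituting the norm-convergent expansion $Q_s(T)^{-1}=\sum_n a_nT^n$ of Theorem \ref{5t1} into the defining formulas and telescoping---is a genuinely self-contained route within this paper's framework, and its algebraic core is correct: since $a_n\in\mathbb{R}$ and $s\bar s=\bar ss$, the identity $a_n\bar s-a_{n-1}=s^{-n-1}$ holds and produces both series, with convergence handled exactly as you say.

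There is, however, a real error in your handling of the quaternionic coefficients, harmless in the main computation but fatal in your closing ``cross-check.'' In this paper (and in \cite{5ref7}) the module structure on $\mathcal{B}_R(X)$ is $(Tq)x=T(qx)$ and $(qT)x=q(Tx)$; hence $\bar sI$ is the operator $x\mapsto\bar sx$ of \emph{left} multiplication, and $T^ns^{-n-1}$ means $x\mapsto T^n(s^{-n-1}x)$. With this reading your step $Q_s(T)^{-1}(\bar sI)=\sum_n a_nT^n\bar s$ is correct, but for a different reason than you give: it is pure composition of operators, and the right-linearity identity $T^n(x\bar s)=(T^nx)\bar s$ you invoke is irrelevant---it concerns the operator $R_{\bar s}$, which is not the operator $\bar sI$. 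This conflation surfaces as an outright false statement at the end: $T-\bar sI\neq-\Delta_{\bar s}$, because $\Delta_{\bar s}x=x\bar s-Tx$ uses right multiplication while $(\bar sI)x=\bar sx$, and consequently $S_L^{-1}(s,T)\neq\Delta_s^{-1}$ in general. Concretely, take $X=\mathbb{H}$, $Tx=\mathrm{j}x$, $s=2\mathrm{i}$: then $Q_s(T)=3I$, so
\begin{equation*}
S_L^{-1}(s,T)x=-\tfrac{1}{3}\mathrm{j}x-\tfrac{2}{3}\mathrm{i}x,\qquad \Delta_s^{-1}x=-\tfrac{1}{3}\mathrm{j}x-\tfrac{2}{3}x\mathrm{i},
\end{equation*}
which differ at $x=\mathrm{j}$. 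The distinction is not pedantic: if $S_L^{-1}(s,T)$ equalled the complex resolvent $\Delta_s^{-1}$, Proposition \ref{5p4} would be a pointwise triviality and would not need $f$ to be intrinsic, whereas the entire point of that proposition (and of the realness argument $\frac{1}{2\pi i}\int_{\partial D_i}s^{-n-1}f(s)\,ds\in\mathbb{R}$ in its proof) is that the two kernels are \emph{different} operators whose integrals against intrinsic functions coincide. So keep your main computation, but justify the expansion of $Q_s(T)^{-1}(\bar sI)$ by the definition of the module structure rather than by right linearity, and delete the cross-check.
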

In \cite{5ref7} the authors defined the S-functional calculus as follows. For every right (resp. left) slice hyperholomorphic function $f$ and every bounded slice Cauchy domain $D$ such that  $\sigma_S(T)\subset D\subset\overline{D}\subset\mathcal{D}(f)$, 
$$f(T):=\frac{1}{2\pi}\int_{\partial D_i}f(s)ds_iS_R^{-1}(s,T)\;\;\;\;\left(\text{resp. }f(T):=\frac{1}{2\pi}\int_{\partial D_i}S_L^{-1}(s,T)ds_if(s)\right).$$
\begin{proposition}
\label{5p4}
Let $T\in\mathcal{B}_R(X)$ and let $D$ be a bounded slice Cauchy domain such that $\sigma_S(T)\subset D$. Let $f$ be an intrinsic slice hyperholomorphic function such that $\overline{D}\subset\mathcal{D}(f)$. Then
\begin{align}
\frac{1}{2\pi}\int_{\partial D_i}S_L^{-1}(s,T)ds_if(s)=\frac{1}{2\pi i}\int_{\partial D_i}\Delta_s^{-1}f(s)ds. \label{5e5}
\end{align}
That is $f(T)=f_i(T_i)$.
\end{proposition}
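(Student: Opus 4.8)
The plan is to turn the left-hand side of (\ref{5e5}) into the Riesz--Dunford integral (\ref{5e4}), which is already known to equal $f_i(T_i)$. The engine of the argument is the observation that, once $s$ is restricted to the slice $\mathbb{C}_i$, the left $S$-resolvent operator $S_L^{-1}(s,T)$ is nothing but the resolvent $\Delta_s^{-1}$ of the complex operator $T_i$. Granting this, the only remaining work is to rewrite the slice measure $ds_i$ in terms of $ds$ and to move the scalar factors past the $\mathbb{C}_i$-linear resolvent, which is legitimate precisely because $f$ is intrinsic.

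First I would prove the operator identity $S_L^{-1}(s,T)=\Delta_s^{-1}$ for every $s\in\rho_S(T)\cap\mathbb{C}_i$. Starting from the definition $S_L^{-1}(s,T)=-Q_s(T)^{-1}(T-\bar s I)$, I would factor $Q_s(T)=\Delta_{\bar s}\Delta_s=\Delta_s\Delta_{\bar s}$ by Lemma \ref{5l2}; since $s\in\rho_S(T)$, Lemma \ref{5l3} ensures that $\Delta_s$ and $\Delta_{\bar s}$ are bijective, so that $Q_s(T)^{-1}=\Delta_s^{-1}\Delta_{\bar s}^{-1}$. Matching the remaining factor as $-(T-\bar s I)=\Delta_{\bar s}$, the three factors telescope: $S_L^{-1}(s,T)=Q_s(T)^{-1}\Delta_{\bar s}=\Delta_s^{-1}\Delta_{\bar s}^{-1}\Delta_{\bar s}=\Delta_s^{-1}$. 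By Proposition \ref{5p5}, $\Delta_s^{-1}=(I_is-T_i)^{-1}$ is exactly the resolvent of $T_i$ at $s$, i.e. the operator appearing in (\ref{5e4}).

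Substituting this identity into the defining integral of $f(T)$ converts the left-hand side of (\ref{5e5}) into $\frac{1}{2\pi}\int_{\partial D_i}\Delta_s^{-1}\,ds_i\,f(s)$. Because $f$ is intrinsic, Lemma \ref{5l6}(ii) gives $f(s)\in\mathbb{C}_i$ whenever $s\in\mathbb{C}_i$, so $f(s)$ commutes both with the $\mathbb{C}_i$-valued measure $ds_i$ and with the $\mathbb{C}_i$-linear operator $\Delta_s^{-1}$; writing $ds_i=ds/i=-i\,ds$ and using $\frac{1}{2\pi}\cdot\frac{1}{i}=\frac{1}{2\pi i}$, the integral collapses to $\frac{1}{2\pi i}\int_{\partial D_i}\Delta_s^{-1}f(s)\,ds$, which is the right-hand side of (\ref{5e5}). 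Since $\sigma(T_i)=\sigma_S(T)\cap\mathbb{C}_i\subset D_i$ and $D_i$ is a bounded Cauchy domain in $\mathbb{C}_i$, this last integral is $f_i(T_i)$ by (\ref{5e4}), completing the identification $f(T)=f_i(T_i)$.

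The delicate step, and the one I expect to be the main obstacle, is the operator identity $S_L^{-1}(s,T)=\Delta_s^{-1}$: it requires correctly matching the one-sided multiplication in $\bar s I$ against the factorization of $Q_s(T)$, and more generally it demands careful bookkeeping of left versus right scalar multiplication in the bimodule $\mathcal{B}_R(X)$. Once one checks that on the slice $\mathbb{C}_i$ the factor $-(T-\bar s I)$ is genuinely $\Delta_{\bar s}=R_{\bar s}-T$ and that the $\mathbb{C}_i$-valued scalars $ds_i$ and $f(s)$ may be freely commuted past the $\mathbb{C}_i$-linear resolvent, the remainder of the argument is routine.
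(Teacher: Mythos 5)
Your argument breaks at precisely the step you flagged as delicate: under the conventions of this paper the identity $S_L^{-1}(s,T)=\Delta_s^{-1}$ is \emph{false} for every nonreal $s$, and no bookkeeping can repair it. The paper fixes the bimodule structure of $\mathcal{B}_R(X)$ by $(qT)(x)=q(Tx)$ and $(Tq)(x)=T(qx)$; hence the operator $\bar sI$ in the definition $S_L^{-1}(s,T)=-Q_s(T)^{-1}(T-\bar sI)$ is \emph{left} multiplication, $(\bar sI)x=\bar sx$, whereas $\Delta_{\bar s}=R_{\bar s}-T$ is built from \emph{right} multiplication, $\Delta_{\bar s}x=x\bar s-Tx$. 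So your matching $-(T-\bar sI)=\Delta_{\bar s}$ fails unless $s\in\mathbb{R}$. Working pointwise: $S_L^{-1}(s,T)x=Q_s(T)^{-1}(\bar sx)-TQ_s(T)^{-1}x$, while $\Delta_s^{-1}x=\Delta_{\bar s}Q_s(T)^{-1}x=(Q_s(T)^{-1}x)\bar s-TQ_s(T)^{-1}x$, so the discrepancy is $S_L^{-1}(s,T)x-\Delta_s^{-1}x=Q_s(T)^{-1}(\bar sx-x\bar s)$, which is nonzero in general. Concretely, take $X=\mathbb{H}$, $T=0$, $i=\mathrm{i}$, $s=\mathrm{i}$: then $S_L^{-1}(\mathrm{i},0)x=\mathrm{i}^{-1}x$ but $\Delta_{\mathrm{i}}^{-1}x=x\,\mathrm{i}^{-1}$, and on $x=\mathrm{j}$ these give $-\mathrm{k}$ and $\mathrm{k}$ respectively. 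There is also a structural obstruction: $S_L^{-1}(s,T)$ is $\mathbb{H}$-right linear (that is the whole point of the S-resolvent), whereas $\Delta_s^{-1}$ is only $\mathbb{C}_i$-linear, since $\Delta_s(xq)\neq(\Delta_sx)q$ whenever $qs\neq sq$; two such operators cannot coincide. A related symptom: your proof invokes intrinsicness only through ``$f(s)\in\mathbb{C}_i$ for $s\in\mathbb{C}_i$'', a property also enjoyed by the non-intrinsic constant $f\equiv i$, for which the conclusion fails ($f(T)=Ii$ acts as $x\mapsto ix$, while $f_i(T_i)$ acts as $x\mapsto xi$); an argument resting only on that property therefore proves too much.

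The proposition is true because the mismatch $Q_s(T)^{-1}\bigl(\bar sx-x\bar s\bigr)$ vanishes \emph{after integration} against $ds_if(s)$, not pointwise, and this is exactly how the paper argues. Both integrals in (\ref{5e5}) are independent of the choice of slice Cauchy domain, so the paper takes $D=B_r(0)$ with $r>\Vert T\Vert$ and expands $S_L^{-1}(s,T)=\sum_{n\geq0}T^ns^{-n-1}$ and $\Delta_s^{-1}=\sum_{n\geq0}T_i^ns^{-n-1}$ uniformly on $\partial D_i$. Both sides then become series $\sum_nT^nc_n$, resp.\ $\sum_nT_i^nc_n$, with the \emph{same} scalars $c_n=\frac{1}{2\pi i}\int_{\partial D_i}s^{-n-1}f(s)\,ds$. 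The full intrinsic hypothesis enters only here, through Lemma \ref{5l6}(iii): $f(\bar s)=\overline{f(s)}$ forces $\overline{c_n}=c_n$, i.e.\ $c_n\in\mathbb{R}$, and real scalars act identically from the left and from the right on a two-sided space, whence $T^nc_n=T_i^nc_n$ for all $n$ and the two series agree. To salvage your outline you would have to replace the false operator identity by such an integrated statement, e.g.\ show directly that $\int_{\partial D_i}Q_s(T)^{-1}\bigl(L_{\bar s}-R_{\bar s}\bigr)\,ds_if(s)=0$ for intrinsic $f$ (where $L_{\bar s}x:=\bar sx$), which amounts to the same real-coefficients computation the paper performs.
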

\begin{proof}
The two integrals in (\ref{5e5}) are independent of the choice of the bounded slice Cauchy domain $D$. Let $D$ be the ball $B_r(0)$ with $\Vert T\Vert < r$. Since $\Delta_s^{-1}=\sum_{n=0}^{+\infty}T_i^ns^{-n-1}$ converges uniformly on $\partial D_i$,
$$\frac{1}{2\pi i}\int_{\partial D_i}\Delta_s^{-1}f(s)ds=\sum_{n=0}^{+\infty}T_i^n\frac{1}{2\pi i}\int_{\partial D_i}s^{-n-1}f(s)ds.$$
And $S_L^{-1}(s,T)=\sum_{n=0}^{+\infty}T^ns^{-n-1}$ converges uniformly on $\partial D_i$,
\begin{align*}
\frac{1}{2\pi}\int_{\partial D_i}S_L^{-1}(s,T)ds_if(s)&=\sum_{n=0}^{+\infty}T^n\frac{1}{2\pi}\int_{\partial D_i}s^{-n-1}ds_if(s)\\
&=\sum_{n=0}^{+\infty}T^n\frac{1}{2\pi i}\int_{\partial D_i}s^{-n-1}f(s)ds.
\end{align*}
Since $f$ is intrinsic, by Lemma \ref{5l6} (iii),
$$\overline{\frac{1}{2\pi i}\int_{\partial D_i}s^{-n-1}f(s)ds}=\frac{1}{2\pi i}\int_{\partial D_i}s^{-n-1}f(s)ds.$$
Hence $\frac{1}{2\pi i}\int_{\partial D_i}s^{-n-1}f(s)ds\in\mathbb{R}$, and so  
$$T^n\frac{1}{2\pi i}\int_{\partial D_i}s^{-n-1}f(s)ds=T_i^n\frac{1}{2\pi i}\int_{\partial D_i}s^{-n-1}f(s)ds\;\;\text{ for all }\;\;n\in\mathbb{N}.$$
Thus $$\frac{1}{2\pi}\int_{\partial D_i}S_L^{-1}(s,T)ds_if(s)=\frac{1}{2\pi i}\int_{\partial D_i}f(s)\Delta_s^{-1}ds.$$
\end{proof}
\begin{remark}
\emph{It is easy to see that $f(T)_i=f_i(T_i)$.}
\end{remark}
\begin{lemma} 
\label{5l4}
Let $U\subseteq\mathbb{H}$ be an axially symmetric open set and let $f$ be a right (resp. left) slice hyperholomorphic function on $U$. Then there exist intrinsic slice hyperholomorphic functions $f_1,f_2,f_3$ and $f_4$ on $U$ such that 
$$f=f_1+\mathrm{i} f_2+\mathrm{j}f_3+\mathrm{k}f_4\;\;\;\; \left(\text{resp. }f=f_1+ f_2\mathrm{i}+f_3\mathrm{j}+f_4\mathrm{k}\right).$$
\end{lemma}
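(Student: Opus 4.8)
The plan is to produce the four intrinsic pieces by splitting the two coefficient functions of the slice representation of $f$ into their real scalar components; the regularity and the intrinsicness will then be inherited automatically, and only the order of multiplication will require care. I will write $I\in\mathbb{S}$ for the generic slice imaginary unit (to keep it distinct from the fixed basis unit $\mathrm{i}$) and treat the right slice hyperholomorphic case, the left case being symmetric.

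First I would use the representation of $f\in\mathcal{SH}_R(U)$: there are $f_0,f_1:\mathcal{U}\to\mathbb{H}$ satisfying the compatibility conditions (\ref{5eq6}) and the Cauchy--Riemann equations (\ref{5eq7}) with $f(\alpha+\beta I)=f_0(\alpha,\beta)+f_1(\alpha,\beta)I$. Expanding in the real basis $\{1,\mathrm{i},\mathrm{j},\mathrm{k}\}$ of $\mathbb{H}$, I write $f_0=\sum_{m=1}^{4}f_0^{(m)}e_m$ and $f_1=\sum_{m=1}^{4}f_1^{(m)}e_m$ with $e_1=1,\ e_2=\mathrm{i},\ e_3=\mathrm{j},\ e_4=\mathrm{k}$ and $f_0^{(m)},f_1^{(m)}:\mathcal{U}\to\mathbb{R}$. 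Since (\ref{5eq6}) and (\ref{5eq7}) are $\mathbb{R}$-linear relations and real partial differentiation acts componentwise on $\mathbb{H}$-valued maps, each real pair $(f_0^{(m)},f_1^{(m)})$ again satisfies (\ref{5eq6}) and (\ref{5eq7}). Consequently, setting $g_m(\alpha+\beta I):=f_0^{(m)}(\alpha,\beta)+f_1^{(m)}(\alpha,\beta)I$ for $I\in\mathbb{S}$ gives, for each $m$, a well-defined slice hyperholomorphic function on $U$ (well-definedness being exactly the inherited condition (\ref{5eq6})) whose coefficient functions are real-valued; by the definition of $\mathcal{N}(U)$ each $g_m$ is therefore intrinsic. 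Because these coefficients are real, the left and right slice forms of $g_m$ coincide, so no ambiguity arises below.

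Finally I would verify the identity by a pointwise computation at $q=\alpha+\beta I$. Using only that the real scalars $f_0^{(m)},f_1^{(m)}$ commute with every quaternion and then factoring $I$ out to the right,
$$\sum_{m=1}^{4}e_m\,g_m(q)=\sum_{m=1}^{4}f_0^{(m)}e_m+\Big(\sum_{m=1}^{4}f_1^{(m)}e_m\Big)I=f_0(\alpha,\beta)+f_1(\alpha,\beta)I=f(q),$$
so that $f=g_1+\mathrm{i}g_2+\mathrm{j}g_3+\mathrm{k}g_4$, which is the asserted decomposition with $g_m$ in the role of $f_m$. For a left slice function $f(\alpha+\beta I)=f_0+If_1$ the same construction applies, but now one pulls $I$ to the left through the real scalars $f_1^{(m)}$, which forces the constant units $e_m$ onto the right and yields $f=g_1+g_2\mathrm{i}+g_3\mathrm{j}+g_4\mathrm{k}$. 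The only genuine subtlety, and the step I would be most careful about, is precisely this bookkeeping of multiplication order: the side on which $I$ sits in the slice representation dictates that right slice functions acquire left-sided constant coefficients and left slice functions right-sided ones, exactly as stated.
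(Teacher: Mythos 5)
Your proof is correct and takes essentially the same route as the paper's: both decompose the slice coefficient functions $f_0,f_1$ into their real components along the basis $\{1,\mathrm{i},\mathrm{j},\mathrm{k}\}$, observe that the resulting real pairs inherit the compatibility and Cauchy--Riemann conditions and hence define intrinsic slice hyperholomorphic functions, and reassemble with the constant units on the correct side. You merely make explicit the $\mathbb{R}$-linearity argument and the order-of-multiplication bookkeeping that the paper leaves implicit.
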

\begin{proof}
Let $j\in\mathbb{S}$ and $\alpha, \beta\in\mathbb{R}$ such that $\alpha+\beta j\in U$. Write
$f(\alpha+\beta j) = f_0(\alpha, \beta) + f_1(\alpha, \beta)j$. Let $f_{n,m}$ where $n=0,1$ and $m=1,2,3,4$ be real functions such that
$$f_0(\alpha, \beta)=f_{0,1}(\alpha, \beta)+f_{0,2}(\alpha, \beta)\mathrm{i}+f_{0,3}(\alpha, \beta)\mathrm{j}+f_{0,4}(\alpha, \beta)\mathrm{k};$$
$$f_1(\alpha, \beta)=f_{1,1}(\alpha, \beta)+f_{1,2}(\alpha, \beta)\mathrm{i}+f_{1,3}(\alpha, \beta)\mathrm{j}+f_{1,4}(\alpha, \beta)\mathrm{k}.$$
Let 
$$f_m(\alpha+\beta j):=f_{0,m}(\alpha, \beta)+f_{1,m}(\alpha, \beta)j\;\;\;\;\;\;\alpha+\beta j\in U,\;\;m=1,2,3,4.$$
Since $\{1,\mathrm{i},\mathrm{j},\mathrm{k}\}$ is a basis of $\mathbb{H}$, $f_1,f_2,f_3$ and $f_4$ are intrinsic slice hyperholomorphic functions on $U$.\\
The left slice hyperholomorphic case can be proved similarly.
\end{proof}
Let $T\in\mathcal{B}_R(X)$ and let $D$ be a bounded slice Cauchy domain such that $\sigma_S(T)\subset D$. Let $f$ be a right slice hyperholomorphic function such that $\overline{D}\subset\mathcal{D}(f)$. By Lemma \ref{5l4}, there exist intrinsic slice hyperholomorphic functions $f_1,f_2,f_3,f_4$ such that $f=f_1+\mathrm{i} f_2+\mathrm{j}f_3+\mathrm{k}f_4$. Then
\begin{align*}
f(T)&=\frac{1}{2\pi}\int_{\partial D_i}f(s)ds_iS_R^{-1}(s,T)\\
&=\frac{1}{2\pi}\int_{\partial D_i}f_1(s)ds_iS_R^{-1}(s,T)+\mathrm{i}\frac{1}{2\pi}\int_{\partial D_i}f_2(s)ds_iS_R^{-1}(s,T)\\
&+\mathrm{j}\frac{1}{2\pi}\int_{\partial D_i}f_3(s)ds_iS_R^{-1}(s,T)+\mathrm{k}\frac{1}{2\pi}\int_{\partial D_i}f_4(s)ds_iS_R^{-1}(s,T).
\end{align*}
By Proposition \ref{5p4}, for all $m=1,2,3,4$, the integral $\frac{1}{2\pi}\int_{\partial D_i}f_m(s)ds_iS_R^{-1}(s,T)$ can be written as $\frac{1}{2\pi i}\int_{\partial D_i}\Delta_s^{-1}f_m(s)ds$. Hence
\begin{align*}
f(T)&=\frac{1}{2\pi i}\int_{\partial D_i}\Delta_s^{-1}f_1(s)ds+\mathrm{i}\frac{1}{2\pi i}\int_{\partial D_i}\Delta_s^{-1}f_2(s)ds\\
&+\mathrm{j}\frac{1}{2\pi i}\int_{\partial D_i}\Delta_s^{-1}f_3(s)ds+\mathrm{k}\frac{1}{2\pi i}\int_{\partial D_i}\Delta_s^{-1}f_4(s)ds.
\end{align*}
Similarly, for any left slice hyperholomorphic function such that $\overline{U}\subset\mathcal{D}(f)$, there exist intrinsic slice hyperholomorphic functions $f_1,f_2,f_3,f_4$ such that
\begin{align*}
f(T)&=\frac{1}{2\pi i}\int_{\partial D_i}\Delta_s^{-1}f_1(s)ds+\left(\frac{1}{2\pi i}\int_{\partial D_i}\Delta_s^{-1}f_2(s)ds\right)\mathrm{i}\\
&+\left(\frac{1}{2\pi i}\int_{\partial D_i}\Delta_s^{-1}f_3(s)ds\right)\mathrm{j}+\left(\frac{1}{2\pi i}\int_{\partial D_i}\Delta_s^{-1}f_4(s)ds\right)\mathrm{k}.
\end{align*}
Thus the S-functional calculus is obtained easily by the Riesz-Dunford functional calculus for complex bounded linear operators.\\

Now we are able to show that most of the properties that hold for the Riesz-Dunford functional calculus can be extended to the S-functional calculus. In \cite{5ref7} the authors extended these properties with several additional efforts.\\

Let $T \in\mathcal{B}_R(X)$ and let $K$ be a compact axially symmetric subset of $\mathbb{H}$. We denote by $\mathcal{SH}_L(K)$, $\mathcal{SH}_R(K)$ and $\mathcal{N}(K)$ the sets of all left, right and intrinsic slice hyperholomorphic functions $f$, where the domain of the function $f$ contains the compact axially symmetric set $K$.
\begin{theorem}[{\cite[Theorem 4.1.3]{5ref7}}, Product rule]
\label{5t7}
Let $T \in\mathcal{B}_R(X)$ and let $f\in\mathcal{N}\big(\sigma_S(T)\big)$ and $g\in\mathcal{SH}_L\big(\sigma_S(T)\big)$ or let $f\in\mathcal{SH}_R\big(\sigma_S(T)\big)$ and $g\in\mathcal{N}\big(\sigma_S(T)\big)$. Then
$$\big(fg\big)\big(T\big) = f(T)g(T).$$
\end{theorem}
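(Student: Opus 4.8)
The plan is to reduce the product rule entirely to the multiplicativity of the Riesz-Dunford calculus on the complex Banach space $X_i$, exploiting the three tools already in place: the decomposition of a slice hyperholomorphic function into four intrinsic ones (Lemma \ref{5l4}), the identity $h(T)=h_i(T_i)$ for intrinsic $h$ (Proposition \ref{5p4}), and the resulting representation $g(T)=g_1(T)+g_2(T)\mathrm{i}+g_3(T)\mathrm{j}+g_4(T)\mathrm{k}$ displayed before the statement. By the symmetry of the two hypotheses I would treat only the case $f\in\mathcal{N}\big(\sigma_S(T)\big)$, $g\in\mathcal{SH}_L\big(\sigma_S(T)\big)$; the case $f\in\mathcal{SH}_R$, $g\in\mathcal{N}$ is handled by the mirror-image argument, pulling $g(T)$ out on the right instead of $f(T)$ on the left.

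The structural heart of the proof is to identify the intrinsic components of the product. Writing $g=g_1+g_2\mathrm{i}+g_3\mathrm{j}+g_4\mathrm{k}$ with $g_m\in\mathcal{N}$ via Lemma \ref{5l4}, I would show
$$fg=(fg_1)+(fg_2)\mathrm{i}+(fg_3)\mathrm{j}+(fg_4)\mathrm{k},$$
and that each factor $fg_m$ is again intrinsic, so that this is precisely the canonical left slice decomposition of $fg$. The displayed equality is immediate from $(fg)(q)=f(q)g(q)$ together with associativity, while the intrinsic character of $fg_m$ is where the care is needed: using Lemma \ref{5l6}(iii) and the fact that conjugation reverses products, one gets $\overline{(fg_m)(q)}=\overline{g_m(q)}\,\overline{f(q)}=g_m(\bar q)f(\bar q)$, and this equals $f(\bar q)g_m(\bar q)=(fg_m)(\bar q)$ because, by Lemma \ref{5l6}(ii), two intrinsic functions take values in a common commutative slice plane $\mathbb{C}_{j}$ and hence commute pointwise. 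This also confirms that $fg\in\mathcal{SH}_L$, so that $(fg)(T)$ is defined.

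With the decomposition in hand I would apply the functional calculus term by term. The representation preceding the theorem gives $(fg)(T)=(fg_1)(T)+(fg_2)(T)\mathrm{i}+(fg_3)(T)\mathrm{j}+(fg_4)(T)\mathrm{k}$. For each $m$, Proposition \ref{5p4} turns the intrinsic calculus into the complex one, $(fg_m)(T)=(fg_m)_i(T_i)=(f_i\, g_{m,i})(T_i)$, and the multiplicativity of the Riesz-Dunford calculus recalled in the Introduction (legitimate since $f_i,g_{m,i}$ are holomorphic on a neighborhood of $\sigma(T_i)=\sigma_S(T)\cap\mathbb{C}_i$ by Remark \ref{5r1}) yields
$$(f_i\, g_{m,i})(T_i)=f_i(T_i)g_{m,i}(T_i)=f(T)g_m(T).$$

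Substituting back, $(fg)(T)=f(T)g_1(T)+f(T)g_2(T)\mathrm{i}+f(T)g_3(T)\mathrm{j}+f(T)g_4(T)\mathrm{k}$, and I would finish by factoring $f(T)$ out on the left. This last step uses only the compatibility of the two-sided structure on $\mathcal{B}_R(X)$: for operators $A,B$ and $q\in\mathbb{H}$ one has $A(Bq)=(AB)q$ directly from $(Bq)x=B(qx)$, so the right scalars $\mathrm{i},\mathrm{j},\mathrm{k}$ pass through $f(T)$ and reassemble $g(T)$, giving $(fg)(T)=f(T)g(T)$. The main obstacle is the second paragraph, namely checking that the product lands in the correct one-sided class and that its four components are exactly the pointwise products $fg_m$; once that bookkeeping is settled, the remainder is a single application of a cited complex-analytic fact together with the module axioms.
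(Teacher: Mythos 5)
Your proof is correct and takes essentially the same route as the paper: decompose the one-sided slice hyperholomorphic factor into four intrinsic components via Lemma \ref{5l4}, reduce each intrinsic product to the complex Riesz--Dunford calculus on $X_i$ via Proposition \ref{5p4}, and reassemble using the displayed representation of $f(T)$ (resp.\ $g(T)$). The paper simply treats the mirror case ($f\in\mathcal{SH}_R$, $g\in\mathcal{N}$) and leaves implicit the bookkeeping you spell out (intrinsicness of the component products and the factoring through the two-sided module structure), so the two arguments are the same in substance.
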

\begin{proof}
Let $f\in\mathcal{SH}_R\big(\sigma_S(T)\big)$ and $g\in\mathcal{N}\big(\sigma_S(T)\big)$. By Lemma \ref{5l4} there exist intrinsic slice hyperholomorphic functions $f_1,f_2,f_3$ and $f_4$ such that 
$$f=f_1+\mathrm{i} f_2+\mathrm{j}f_3+\mathrm{k}f_4.$$
Then $fg=f_1g+\mathrm{i} f_2g+\mathrm{j}f_3g+\mathrm{k}f_4g$. Hence 
$$(fg)(T)=(f_1g)(T)+\mathrm{i} (f_2g)(T)+\mathrm{j}(f_3g)(T)+\mathrm{k}(f_4g)(T).$$
Note that $(f_mg)(T)=(f_mg)_i(T_i)=(f_m)_i(T_i)g_i(T_i)=f_m(T)g(T)$ $\;(m=1,2,3,4)$. Hence $(fg)(T)=f(T)g(T).$
\end{proof}
\begin{theorem}[{\cite[Theorem 4.2.1]{5ref7}}, The spectral mapping theorem]
\label{5t8}
Let $T \in\mathcal{B}_R(X)$ and let $f\in\mathcal{N}\big(\sigma_S(T)\big)$. Then
$$\sigma_S\big(f(T)\big)=f\big(\sigma_S(T)\big):=\{f(q):q\in\sigma_S(T)\}.$$
\end{theorem}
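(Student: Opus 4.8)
The plan is to transport the statement to the complex operator $T_i$, invoke the classical Riesz-Dunford spectral mapping theorem recalled in the introduction, and then recover the quaternionic statement by axial symmetrization. The whole argument rests on the dictionary already built in the paper, namely $\sigma_S(T)=[\sigma(T_i)]$ and $(f(T))_i=f_i(T_i)$, so that almost every step is a direct translation into the complex setting.

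First I would note that since $f$ is intrinsic, $f(T)$ lies in $\mathcal{B}_R(X)$, so that $\sigma_S(f(T))$ is defined and Proposition \ref{5p3} applies to the operator $f(T)$, giving $\sigma_S(f(T))=[\sigma((f(T))_i)]$. By the remark following Proposition \ref{5p4} we have $(f(T))_i=f_i(T_i)$, where $f_i=f\vert_{\mathcal{D}(f)\cap\mathbb{C}_i}$ is holomorphic on $\mathcal{D}(f)\cap\mathbb{C}_i$ by Remark \ref{5r1}. Hence $\sigma_S(f(T))=[\sigma(f_i(T_i))]$.

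Next I would apply the classical spectral mapping theorem to $T_i\in\mathcal{B}(X_i)$, obtaining $\sigma(f_i(T_i))=f_i(\sigma(T_i))$. Since $\sigma(T_i)=\sigma_S(T)\cap\mathbb{C}_i\subset\mathbb{C}_i$ and $f_i$ is the restriction of $f$ to $\mathbb{C}_i$, this equals $f(\sigma(T_i))$ (the values again lie in $\mathbb{C}_i$ by Lemma \ref{5l6} (ii)). Thus $\sigma_S(f(T))=[f(\sigma(T_i))]$.

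The final step, which I expect to be the main obstacle, is to show that axial symmetrization commutes with $f$, that is $[f(B)]=f([B])$ for $B=\sigma(T_i)\subset\mathbb{C}_i$; combined with $\sigma_S(T)=[\sigma(T_i)]$ from Proposition \ref{5p3}, this gives $[f(\sigma(T_i))]=f([\sigma(T_i)])=f(\sigma_S(T))$ and finishes the proof. I would establish the commutation spherewise: for $q=\alpha+\beta i\in B$ the intrinsic representation reads $f(\alpha+\beta j)=f_0(\alpha,\beta)+f_1(\alpha,\beta)j$ with $f_0,f_1$ real-valued, so as $j$ runs over $\mathbb{S}$ the image $f([q])$ is exactly the sphere $\{f_0(\alpha,\beta)+f_1(\alpha,\beta)j:j\in\mathbb{S}\}=[f(q)]$. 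Taking unions over $q\in B$ then yields $f([B])=\bigcup_{q\in B}[f(q)]=[f(B)]$, and chaining the displayed equalities completes the argument.
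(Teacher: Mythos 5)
Your proof is correct and follows essentially the same route as the paper's: reduce to the complex operator $T_i$ via Proposition \ref{5p3} and the identity $(f(T))_i=f_i(T_i)$, apply the classical Riesz--Dunford spectral mapping theorem, and then symmetrize. The only difference is that you explicitly verify the commutation $f([B])=[f(B)]$ for intrinsic $f$ (using the representation $f(\alpha+\beta j)=f_0(\alpha,\beta)+f_1(\alpha,\beta)j$ with $f_0,f_1$ real-valued and the symmetry of $\mathbb{S}$ under $j\mapsto -j$), a step the paper's chain of equalities $\big[f_i\big(\sigma(T_i)\big)\big]=f\big([\sigma(T_i)]\big)$ asserts without justification, so your write-up in fact fills in the one detail the paper leaves implicit.
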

\begin{proof}
Note that $\sigma\big(f(T)_i\big)=\sigma\big(f_i(T_i)\big)=f_i\big(\sigma(T_i)\big)$. Thus by Proposition \ref{5p3},
$$\sigma_S\big(f(T)\big)=\big[\sigma\big(f(T)_i\big)\big]=\big[f_i\big(\sigma(T_i)\big)\big]=f\big([\sigma(T_i)]\big)=f\big(\sigma_S(T)\big).$$
\end{proof}
\begin{theorem}[{\cite[Theorem 4.2.4]{5ref7}}, Composition rule]
\label{5t9}
Let $T \in\mathcal{B}_R(X)$ and let $f\in\mathcal{N}\big(\sigma_S(T)\big)$. If $g\in\mathcal{SH}_L\big(\sigma_S\big(f(T)\big)\big)$, then $g\circ f\in\mathcal{SH}_L\big(\sigma_S(T)\big)$, and if $g\in\mathcal{SH}_R\big(f\big(\sigma_S(T)\big)\big)$, then $g\circ f\in\mathcal{SH}_R\big(\sigma_S(T)\big)$. In both cases,
$$g\big(f(T)\big) = \big(g \circ f\big)\big(T\big).$$
\end{theorem}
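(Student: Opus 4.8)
The plan is to reduce the identity $g\big(f(T)\big)=(g\circ f)(T)$ to the classical composition rule for the Riesz–Dunford calculus on the complex Banach space $X_i$, exactly in the spirit of the proofs of Theorem \ref{5t7} and Theorem \ref{5t8}. The two ingredients that make this work are Proposition \ref{5p4} (which identifies $h(T)$ with $h_i(T_i)$ for intrinsic $h$) and the hypothesis that $f$ is intrinsic, so that $f$ carries each slice $\mathcal{D}(f)\cap\mathbb{C}_i$ into $\mathbb{C}_i$ and therefore commutes with the operation of restricting to a slice. Throughout, Theorem \ref{5t8} guarantees $\sigma_S(f(T))=f(\sigma_S(T))$, so the two domain hypotheses on $g$ in the statement describe the same axially symmetric set.

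First I would treat the case $g\in\mathcal{SH}_L\big(\sigma_S(f(T))\big)$ and dispose of the general $g$ by the decomposition of Lemma \ref{5l4}: write $g=g_1+g_2\mathrm{i}+g_3\mathrm{j}+g_4\mathrm{k}$ with each $g_m$ intrinsic. Since $f$ is intrinsic, evaluating at $f(q)$ gives $g\circ f=(g_1\circ f)+(g_2\circ f)\mathrm{i}+(g_3\circ f)\mathrm{j}+(g_4\circ f)\mathrm{k}$. Each $g_m\circ f$ is again intrinsic: by Lemma \ref{5l6}(iii) one has $f(\bar q)=\overline{f(q)}$ and $g_m(\bar p)=\overline{g_m(p)}$, whence $(g_m\circ f)(\bar q)=\overline{(g_m\circ f)(q)}$, and the restriction $(g_m\circ f)\vert_{\mathbb{C}_i}=(g_m)_i\circ f_i$ is holomorphic by Remark \ref{5r1} and the ordinary chain rule. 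This simultaneously shows $g\circ f\in\mathcal{SH}_L\big(\sigma_S(T)\big)$ (its domain contains a neighborhood of $\sigma_S(T)$ by continuity of $f$) and, via the formula for the left calculus obtained in the discussion preceding Theorem \ref{5t7}, reduces the claim to proving $(g_m\circ f)(T)=g_m\big(f(T)\big)$ for intrinsic $g_m$.

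For intrinsic $g_m$ the whole computation lives on $X_i$. Writing $S:=f(T)$ and using Proposition \ref{5p4} together with $(g_m\circ f)_i=(g_m)_i\circ f_i$ and $f_i(T_i)=f(T)_i=S_i$, I would chain
\begin{align*}
(g_m\circ f)(T)&=(g_m\circ f)_i(T_i)=\big((g_m)_i\circ f_i\big)(T_i)\\
&=(g_m)_i\big(f_i(T_i)\big)=(g_m)_i(S_i)=g_m(S)=g_m\big(f(T)\big),
\end{align*}
where the middle equality $\big((g_m)_i\circ f_i\big)(T_i)=(g_m)_i\big(f_i(T_i)\big)$ is precisely the classical composition rule recalled in the Introduction, the first and second-to-last equalities are Proposition \ref{5p4} applied to $T$ and to $S$ respectively, and $(g_m)_i(S_i)=g_m(S)$ is again Proposition \ref{5p4}. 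Summing over $m$ with the units $\mathrm{i},\mathrm{j},\mathrm{k}$ restored on the right then yields $(g\circ f)(T)=g\big(f(T)\big)$.

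The \emph{main obstacle} — really the only genuinely quaternionic point — is the bookkeeping that lets the classical rule apply: I must verify that restriction to the slice $\mathbb{C}_i$ commutes with composition, i.e. $(g_m\circ f)_i=(g_m)_i\circ f_i$, which uses that $f$ is intrinsic so that $f_i$ maps $\mathbb{C}_i$ into $\mathbb{C}_i$, and I must check the domain compatibility needed by the classical rule. Here $(g_m)_i$ is holomorphic on a neighborhood of $\sigma(S_i)=f_i\big(\sigma(T_i)\big)$ (the last equality being the classical spectral mapping theorem together with $S_i=f_i(T_i)$), so the standard shrinking of the domain of $f_i$ around $\sigma(T_i)$ so that its image lands inside the domain of $(g_m)_i$ secures the hypotheses of the classical composition rule. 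The right-handed case $g\in\mathcal{SH}_R\big(f(\sigma_S(T))\big)$ is handled identically after decomposing $g=g_1+\mathrm{i}g_2+\mathrm{j}g_3+\mathrm{k}g_4$ and using the right S-resolvent expression for the calculus, the units now being restored on the left.
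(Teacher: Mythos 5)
Your proposal is correct and follows essentially the same route as the paper's own proof: decompose $g$ into intrinsic components via Lemma \ref{5l4}, reduce the identity to intrinsic $g_m$, and then obtain $(g_m\circ f)(T)=(g_m\circ f)_i(T_i)=(g_m)_i\big(f_i(T_i)\big)=g_m\big(f(T)\big)$ from Proposition \ref{5p4} and the classical Riesz--Dunford composition rule on $X_i$. The only differences are cosmetic — you treat the left case first where the paper treats the right case, and you spell out details the paper leaves implicit (that $g_m\circ f$ is intrinsic, that $(g_m\circ f)_i=(g_m)_i\circ f_i$, and the domain compatibility via Theorem \ref{5t8}) — which if anything makes your write-up more complete.
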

\begin{proof}
Let $f\in\mathcal{N}\big(\sigma_S(T)\big)$ and $g\in\mathcal{SH}_R\big(\sigma_S(T)\big)$. By lemma \ref{5l4} there exist intrinsic slice hyperholomorphic functions $g_1,g_2,g_3$ and $g_4$ such that 
$$g=g_1+\mathrm{i} g_2+\mathrm{j}g_3+\mathrm{k}g_4.$$
Then $g\circ f=g_1\circ f+\mathrm{i} g_2\circ f+\mathrm{j}g_3\circ f+\mathrm{k}g_4\circ f$. Hence 
$$\big(g\circ f\big)\big(T\big)=\big(g_1\circ f\big)\big(T\big)+\mathrm{i} \big(g_2\circ f\big)\big(T\big)+\mathrm{j}\big(g_3\circ f\big)\big(T\big)+\mathrm{k}\big(g_4\circ f\big)\big(T\big).$$
We have $\big(g_m\circ f\big)\big(T\big)=\big(g_m\circ f\big)_i\big(T_i\big)=\big(g_m\big)_i\big(f_i(T_i)\big)=g_m\big(f(T)\big)$. Hence $\big(g\circ f\big)\big(T\big)=g\big(f(T)\big).$ The remaining case can be proved similarly.
\end{proof}
\begin{theorem}
Let $T \in\mathcal{B}_R(X)$ and let $\alpha\in\mathbb{R}$ such that $\alpha\notin \sigma_S(T)$. Then we have 
$$\mathrm{dist}\big(\alpha,\sigma_S(T)\big)=\frac{1}{r_S\big((\alpha I-T)^{-1}\big)}.$$
\end{theorem}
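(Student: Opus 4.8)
The plan is to transfer everything to the complex operator $T_i$ and reduce to classical Riesz--Dunford theory, exactly in the spirit of the preceding sections. First I would record that since $\alpha\in\mathbb{R}$ we have $\bar\alpha=\alpha$, so $R_\alpha x=x\alpha=\alpha x$ and hence $\Delta_\alpha=\alpha I-T$ and $Q_\alpha(T)=\Delta_\alpha\Delta_{\bar\alpha}=(\alpha I-T)^2$. The hypothesis $\alpha\notin\sigma_S(T)$ says that $Q_\alpha(T)$ is bijective, so by Lemma \ref{5l3} the operator $\alpha I-T=\Delta_\alpha$ is bijective, and therefore $(\alpha I-T)^{-1}\in\mathcal{B}_R(X)$ is well defined.

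Next I would compute the right-hand side through the complex picture. Writing $S:=(\alpha I-T)^{-1}$, the identity $r_S(S)=r(S_i)$ established above reduces the problem to a purely complex one. Since $\alpha$ is real, $(\alpha I-T)_i=\alpha I_i-T_i$ as operators on the complex Banach space $X_i$, whence $S_i=\big((\alpha I-T)^{-1}\big)_i=(\alpha I_i-T_i)^{-1}$; that is, $S_i$ is the resolvent of $T_i$ at $\alpha$. Applying the classical spectral mapping theorem to the map $\lambda\mapsto(\alpha-\lambda)^{-1}$ gives $\sigma(S_i)=\{(\alpha-\lambda)^{-1}:\lambda\in\sigma(T_i)\}$, so that
$$r_S\big((\alpha I-T)^{-1}\big)=r(S_i)=\max_{\lambda\in\sigma(T_i)}\frac{1}{|\alpha-\lambda|}=\frac{1}{\mathrm{dist}\big(\alpha,\sigma(T_i)\big)},$$
the maximum being attained because $\sigma(T_i)$ is a nonempty compact set not containing $\alpha$.

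It then remains to identify $\mathrm{dist}\big(\alpha,\sigma(T_i)\big)$ with $\mathrm{dist}\big(\alpha,\sigma_S(T)\big)$. By Proposition \ref{5p3} we have $\sigma_S(T)=[\sigma(T_i)]$, so I would compare the distance from the real number $\alpha$ to a point $\lambda=a+b\mathrm{i}\in\sigma(T_i)$ with the distance from $\alpha$ to an arbitrary point $p=a+|b|s$ of the sphere $[\lambda]=a+|b|\mathbb{S}$. Here $\alpha-a$ is real while $|b|s$ is purely imaginary, so the two parts are orthogonal in $\mathbb{H}\cong\mathbb{R}^4$ and $|\alpha-p|^2=(\alpha-a)^2+|b|^2=|\alpha-\lambda|^2$. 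Thus every point of $[\lambda]$ lies at the same distance $|\alpha-\lambda|$ from $\alpha$, and consequently $\mathrm{dist}\big(\alpha,[\sigma(T_i)]\big)=\mathrm{dist}\big(\alpha,\sigma(T_i)\big)$. Combining this with the displayed formula and taking reciprocals yields the claim.

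The only slightly delicate point is this last equality of distances, where the hypothesis that $\alpha$ be real is essential: it is precisely what makes $\alpha$ equidistant from every point of each sphere $[\lambda]$, so that passing from $\sigma(T_i)$ to its axially symmetric hull $\sigma_S(T)$ leaves the distance unchanged. Everything else is a routine transcription of classical resolvent estimates through the correspondence $T\leftrightarrow T_i$.
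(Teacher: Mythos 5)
Your proof is correct, but it is organized differently from the paper's. The paper never leaves the quaternionic level: it observes that $f(q)=(\alpha-q)^{-1}$ is intrinsic slice hyperholomorphic on an axially symmetric neighborhood of $\sigma_S(T)$ not containing $\alpha$, applies the S-spectral mapping theorem (Theorem \ref{5t8}) to get $\sigma_S\big((\alpha I-T)^{-1}\big)=\{(\alpha-q)^{-1}:q\in\sigma_S(T)\}$, and finishes with $\sup\{\vert\alpha-q\vert^{-1}:q\in\sigma_S(T)\}=1/\inf\{\vert\alpha-q\vert:q\in\sigma_S(T)\}$; since the supremum and infimum run over $\sigma_S(T)$ itself, no geometric argument is needed there. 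You instead descend to the complex operator $T_i$: you combine $r_S(S)=r(S_i)$, the identification of $S_i$ with the classical resolvent of $T_i$ at $\alpha$ (Proposition \ref{5p5}), the classical Riesz--Dunford spectral mapping theorem, and finally the observation that the real point $\alpha$ is equidistant from all points of each sphere $[\lambda]$, so that $\mathrm{dist}\big(\alpha,\sigma(T_i)\big)=\mathrm{dist}\big(\alpha,[\sigma(T_i)]\big)=\mathrm{dist}\big(\alpha,\sigma_S(T)\big)$ by Proposition \ref{5p3}. Your route buys two things: it avoids the S-functional calculus altogether --- in particular it needs neither Theorem \ref{5t8} nor the identification of the S-functional-calculus element $f(T)$ with the operator inverse $(\alpha I-T)^{-1}$, which the paper uses tacitly and which strictly speaking requires the product rule (Theorem \ref{5t7}) or Proposition \ref{5p4} --- and it makes explicit, via Lemma \ref{5l3}, why $(\alpha I-T)^{-1}$ exists in $\mathcal{B}_R(X)$ in the first place. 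The paper's route buys brevity: working directly in $\mathbb{H}$ makes your equidistance lemma unnecessary. Both arguments are sound; yours is more self-contained, the paper's is shorter. One cosmetic remark: a point of $\sigma(T_i)$ should be written $a+bi$ with the fixed imaginary unit $i$ defining $\mathbb{C}_i$, not with the quaternion $\mathrm{i}$.
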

\begin{proof}
Let $U\subseteq\mathbb{H}$ be an axially symmetric open set containing $\sigma_S(T)$, but not $\alpha$. Then $f(q)=(\alpha -q)^{-1}$ is an intrinsic slice hyperholomorphic function on $U$. So by Theorem \ref{5t8} we have 
$$\sigma_S\big((\alpha I-T)^{-1}\big)=\{(\alpha -q)^{-1}:q\in\sigma_S(T)\}.$$
So in particular, 
$$r_S\big((\alpha I-T)^{-1}\big)=\sup\{\vert(\alpha -q)^{-1}\vert:q\in\sigma_S(T)\}=1/\inf\{\vert\alpha -q\vert:q\in\sigma_S(T)\}=1/\mathrm{dist}\big(\alpha,\sigma_S(T)\big).$$
\end{proof}
Let $T \in\mathcal{B}_R(X)$, we define
$$\exp(T)=\sum_{n=0}^{\infty}\frac{1}{n!}T^n$$
this series converges in the operator norm. 
\begin{theorem}
Let $T \in\mathcal{B}_R(X)$. Suppose that $0$ is in the unbounded component of $\rho_S(T)$. Then there exists $S \in\mathcal{B}_R(X)$ such that $T=\exp(S)$. In particular, for every integer $n \geq 1$ there exists $S \in\mathcal{B}_R(X)$ such that $S^n = T$. 
\end{theorem}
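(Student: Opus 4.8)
The plan is to transport the problem to the complex operator $T_i\in\mathcal{B}(X_i)$, invoke the classical existence of a holomorphic logarithm, and then promote the resulting logarithm from a $\mathbb{C}_i$-linear to a right $\mathbb{H}$-linear operator; this last promotion is where the only genuine work lies. First I would record that $T$ is invertible, since $0\in\rho_S(T)$ forces $Q_0(T)=T^2$ to be invertible. Next I would prove that
\[
0\text{ is in the unbounded component of }\rho_S(T)\iff 0\text{ is in the unbounded component of }\rho(T_i).
\]
For the direction I need, I use that $\rho_S(T)$ is open and axially symmetric, so each of its connected components is axially symmetric (if $q$ lies in a component $G$, then the sphere $[q]\subset\rho_S(T)$ is connected and meets $G$, hence $[q]\subset G$); applying this to the unbounded component $G$ and pushing forward by $q\mapsto\mathrm{Re}(q)+\vert\mathrm{Im}(q)\vert i$, one checks that $G\cap\mathbb{C}_i$ is an unbounded connected subset of $\rho(T_i)$ containing $0$. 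Classical Riesz--Dunford theory on the complex Banach space $X_i$ then yields a branch of logarithm $\ell$, holomorphic on an axially symmetric neighbourhood $U$ of $\sigma(T_i)$, with $\exp\circ\ell=\mathrm{id}$ on $U$ and $\exp(\ell(T_i))=T_i$.

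When $\ell$ can be chosen intrinsic, i.e. $\ell(\bar z)=\overline{\ell(z)}$ — which is exactly the situation when $\sigma_S(T)$ avoids the half-line $(-\infty,0]$ — Remark \ref{5r1} lets me regard $\ell$ as an element of $\mathcal{N}(U)$, so $S:=\ell(T)\in\mathcal{B}_R(X)$, and the composition rule (Theorem \ref{5t9}), applied with the intrinsic inner function $\ell$ and the entire intrinsic outer function $\exp$, gives
\[
\exp(S)=\exp(\ell(T))=(\exp\circ\ell)(T)=\mathrm{id}(T)=T .
\]
Here $\exp$ taken through the functional calculus agrees with the series in the statement, because by Proposition \ref{5p4} one has $\exp(S)=\exp_i(S_i)=\sum_{n}\tfrac1{n!}S_i^{\,n}=\sum_n\tfrac1{n!}S^{\,n}$, the coefficients being real.

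The main obstacle is the self-conjugate part of $\sigma_S(T)$ meeting the negative reals (the extreme model being $T=-I$, where $\sigma_S(T)=\{-1\}$): there no intrinsic branch of $\log$ exists, since an intrinsic function is real-valued on $\mathbb{R}$ while $\exp$ of a real number is positive. I would isolate this part by its Riesz projection $P$ — a function of $T_i$ which commutes with $R_j$, hence $P\in\mathcal{B}_R(X)$ — and symmetrize the complex logarithm there: with $B=\ell(T_i)$, the operator $S_0:=\tfrac12(B+R_jBR_j^{-1})$ is right $\mathbb{H}$-linear and, because the hypothesis prevents $0$ from being enclosed and thus forces the winding of $\ell$ on this component to be odd, it satisfies $\exp(S_0)=-T|_{\mathrm{ran}\,P}$. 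It then suffices to correct the sign by a right $\mathbb{H}$-linear complex structure $W$ commuting with $T$ on $\mathrm{ran}\,P$: since $W^2=-I$ gives $\exp(\pi W)=\cos\pi\,I+\sin\pi\,W=-I$, the choice $S|_{\mathrm{ran}\,P}:=S_0+\pi W$ yields $\exp(S|_{\mathrm{ran}\,P})=(-T)(-I)=T$ on $\mathrm{ran}\,P$, and assembling $S$ from the two complementary projections finishes the construction. The genuinely quaternionic point — and the step I expect to be hardest — is the production of $W$: it must square to $-I$, commute with $T$, and be right $\mathbb{H}$-linear. I would build it from the antilinear structure $R_j$ and the complex structure $R_i$ available on the right $\mathbb{H}$-module $\mathrm{ran}\,P$, exploiting the flexibility of $\mathbb{S}$ (for instance solving $a^2=-1$ and $aq=qa'$ with $a,a'\in\mathbb{S}$), this being the precise place where the argument leaves the reach of the intrinsic functional calculus.

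Finally, the assertion about $n$-th roots is immediate once $T=\exp(S_\ast)$ with $S_\ast\in\mathcal{B}_R(X)$ is established: setting $S:=\exp(S_\ast/n)$, the identity $\exp(a)^n=\exp(na)$ for a self-commuting exponent gives $S^n=\exp(S_\ast/n)^n=\exp(S_\ast)=T$, and $S\in\mathcal{B}_R(X)$ because $\mathcal{B}_R(X)$ is a Banach space on which the exponential series converges in operator norm.
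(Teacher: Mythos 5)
Your reduction is sound up to its final step, but that final step \emph{is} the theorem, and it is missing. The good parts first: the transfer to $T_i$, the observation that an intrinsic branch of $\log$ cannot exist once $\sigma_S(T)$ meets $(-\infty,0)$, and the symmetrization are all correct (indeed $S_0=\tfrac12(B+R_\mathrm{j}BR_\mathrm{j}^{-1})$ is right linear and equals $\ell_1(T)$ for the intrinsic part $\ell_1(z)=\tfrac12\bigl(\ell(z)+\overline{\ell(\bar z)}\bigr)$, because conjugation by the $\mathbb{C}_i$-antilinear map $R_\mathrm{j}$ replaces $\ell(T_i)$ by $\bar\ell(T_i)$, $\bar\ell(z):=\overline{\ell(\bar z)}$, and these two commute, being Riesz--Dunford functions of the same $T_i$; hence $\exp(S_0)=-T$ on the odd-parity piece). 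But the right linear complex structure $W$ with $W^2=-I$ and $WT=TW$ is never produced; you only list ingredients from which you hope to build it. This is not a deferred technicality, it is the entire content of the statement in the hard case: for $\sigma_S(T)=\{-1\}$, i.e.\ $T=-I+N$ with $N$ quasinilpotent, existence of a logarithm is \emph{equivalent} to existence of such a $W$. (Given $W$, take $S=\log(I-N)+\pi W$ exactly as you propose; conversely, if $T=\exp(S)$ then Theorem \ref{5t8} forces $\sigma_S(S)\subset\bigcup_{k\geq0}(2k+1)\pi\mathbb{S}$, a compact axially symmetric set disjoint from $\mathbb{R}$, so $W:=\psi(S)$ with the intrinsic function $\psi(q)=\mathrm{Im}(q)/\vert\mathrm{Im}(q)\vert$ is a commuting complex structure.) A second, smaller, flaw: the part of $\sigma_S(T)$ meeting the negative reals need not be open and closed in $\sigma_S(T)$ (the spectrum can be connected and contain $-1$ together with nonreal points), so "its Riesz projection $P$" need not exist; the correct splitting is along connected components of a symmetric neighbourhood of $\sigma(T_i)$, on which the parity of the branch is constant.

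For comparison, the paper's proof takes a different route and collapses at exactly the spot you could not fill. It extends the complex logarithm $f$ to a \emph{left} slice hyperholomorphic $\tilde{f}$ on $[O]$ and asserts that $s=\exp(\tilde{f}(s))$ for all $s\in[O]$, hence $T=\exp(\tilde{f}(T))$. When $\tilde{f}$ is not intrinsic (unavoidable in the hard case) this is false, because $\exp\circ\tilde{f}$ is not a slice function: if on the relevant connected symmetric component of $O$ one has $\overline{f(\bar z)}=f(z)-2\pi i m$ with $m$ odd, then for nonreal $s=\alpha+\beta i\in O$ with $f(s)=a+bi$ the representation formula gives $\tilde{f}(\alpha+\beta \mathrm{j})=a+\pi m\,i+(b-\pi m)\,\mathrm{j}$ for $\mathrm{j}\in\mathbb{S}$ anticommuting with $i$, and the exponential of this quaternion either has nonzero $i$-component or is real, so it never equals $\alpha+\beta\mathrm{j}$. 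The failure persists at the operator level: for $T=\mathrm{diag}(-1,\mathrm{j})$ acting on $X=\mathbb{H}^2$ by left multiplication (so $\sigma_S(T)=\{-1\}\cup\mathbb{S}$, with $0$ in the unbounded component of $\rho_S(T)$), the admissible choice $O=\{\,z:1/2<\vert z\vert<2\,\}\setminus(1/2,2)$ with the branch $\arg\in(0,2\pi)$ yields $\tilde{f}(T)=\mathrm{diag}\bigl(\pi\mathrm{i},\,\pi\mathrm{i}-\tfrac{\pi}{2}\mathrm{j}\bigr)$, whose exponential is not $T$ (its second entry $\exp(\pi\mathrm{i}-\tfrac{\pi}{2}\mathrm{j})$ has nonzero $\mathrm{i}$-component since $\vert\pi\mathrm{i}-\tfrac{\pi}{2}\mathrm{j}\vert=\tfrac{\sqrt5}{2}\pi\notin\pi\mathbb{Z}$), although $T$ does have the right linear logarithm $\mathrm{diag}\bigl(\pi\mathrm{i},\tfrac{\pi}{2}\mathrm{j}\bigr)$; here one uses that the S-functional calculus sends a left multiplication $L_a$ to $L_{g(a)}$, which is the slice Cauchy formula. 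So neither your argument nor the paper's establishes the theorem when $\sigma_S(T)$ meets $(-\infty,0)$; the merit of yours is that it isolates the genuinely missing ingredient — a commuting complex structure — rather than concealing it behind an invalid extension step.
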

\begin{proof}
Since $0$ is in the unbounded component of $\rho_S(T)$, $0$ is in the unbounded component of $\rho(T_i)$. By Lemma \ref{5l1}, $\sigma(T_i)$ is symmetric with respect to the real axis. Hence there exists a symmetric simply connected open set $O$ of $\mathbb{C}_i$ containing $\sigma(T_i)$ but not $0$.  By \cite[Theorem 13.18]{5ref11} there exists $f$ holomorphic on $O$ such that $s=\exp(f(s))$ for all $s\in O$. The function $f$ can be extended to a left slice hyperholomorphic function $\tilde{f}$ on $[O]$ (see, \cite[Lemma 2.1.11]{5ref7}). Thus by \cite[Theorem 2.1.8]{5ref7} $s=\exp(\tilde{f}(s))$ for all $s\in [O]$, and so $T=\exp(\tilde{f}(T))$, and $T=\exp(\frac{1}{n} \tilde{f}(T))^n$ for all $n \geq 1$.
\end{proof}

\end{document}